\documentclass[11pt,a4paper]{article}

\usepackage[margin=27mm]{geometry}
\usepackage{amsmath,amssymb,amsthm,mathtools,mathrsfs}
\usepackage{microtype}
\usepackage{needspace}
\usepackage[hidelinks,unicode]{hyperref}
\usepackage{aliascnt}
\usepackage[nameinlink,noabbrev]{cleveref}
\usepackage{indentfirst}

\hypersetup{
  pdftitle={Every Compact Metric Space Is Isometrically Embeddable into the Gromov--Hausdorff Space}
}

\allowdisplaybreaks[2]

\theoremstyle{plain}
\newtheorem{theorem}{Theorem}[section]

\newaliascnt{lemma}{theorem}
\newtheorem{lemma}[lemma]{Lemma}
\aliascntresetthe{lemma}

\newaliascnt{proposition}{theorem}
\newtheorem{proposition}[proposition]{Proposition}
\aliascntresetthe{proposition}

\newaliascnt{corollary}{theorem}
\newtheorem{corollary}[corollary]{Corollary}
\aliascntresetthe{corollary}

\theoremstyle{definition}
\newaliascnt{definition}{theorem}
\newtheorem{definition}[definition]{Definition}
\aliascntresetthe{definition}

\crefname{theorem}{Theorem}{Theorems}
\crefname{lemma}{Lemma}{Lemmas}
\crefname{proposition}{Proposition}{Propositions}
\crefname{corollary}{Corollary}{Corollaries}
\crefname{definition}{Definition}{Definitions}

\newcommand{\M}{\mathcal{M}}
\newcommand{\dGH}{d_{\mathrm{GH}}}
\newcommand{\dis}{\operatorname{dis}}
\newcommand{\diam}{\operatorname{diam}}
\newcommand{\dist}{\operatorname{dist}}
\newcommand{\Corr}{\operatorname{Corr}}
\newcommand{\Isom}{\operatorname{Isom}}
\newcommand{\Aut}{\operatorname{Aut}}
\newcommand{\cL}{\mathcal L}
\newcommand{\id}{\operatorname{id}}

\title{Every Compact Metric Space Is Isometrically Embeddable into the Gromov--Hausdorff Space}
\author{Ryo Fukano}
\date{September 18, 2026}

\begin{document}
\maketitle
\begin{NoHyper}
\begingroup
\renewcommand{\thefootnote}{}
\footnotetext{\textit{2020 Mathematics Subject Classification.} Primary 30L05; Secondary 54E35, 53C23.\\
\textit{Key words and phrases.} Gromov--Hausdorff space, isometric embedding, compact metric space, Cantor space.}
\endgroup
\end{NoHyper}

\begin{abstract}
Let $(\M,\dGH)$ denote the Gromov--Hausdorff space of isometry classes of nonempty compact metric spaces. We prove that every nonempty compact metric space is isometrically embeddable into $(\M,\dGH)$. More precisely, for every $D>0$ and every nonempty compact metric space $K$ with $\diam K\le D$, we realize the space of all $1$-Lipschitz functions on $K$ with values in $[0,D]$ as a family of metrics on a fixed Cantor space. Under this realization, the Gromov--Hausdorff distance agrees exactly with the uniform distance between functions, and each resulting metric space has diameter at most $76D$. We also construct finite approximations for which the Gromov--Hausdorff distance is given by an exact formula, together with a uniform approximation estimate.
\end{abstract}

\section{Introduction}

Let $\M$ denote the space of isometry classes of nonempty compact metric spaces equipped with the Gromov--Hausdorff distance. The question of whether every nonempty compact metric space can be isometrically embedded into $\M$ was posed by S.~Iliadis and recorded in the work of Iliadis--Ivanov--Tuzhilin \cite{IIT}. They proved that every finite metric space admits such an embedding.

Byakuno restated the question in \cite[Problem~1.1]{Byakuno} and proved an isometric embedding theorem for countable products with summable diameter bounds. More precisely, if $(r_n)_{n\ge1}$ is a summable sequence of positive numbers and, for each $n$, one takes the subspace of $\M$ consisting of isometry classes of compact metric spaces of diameter at most $r_n$, then the countable $\ell^\infty$-product of these subspaces admits an isometric embedding into $\M$ \cite[Theorem~1.3]{Byakuno}. This gives another class of spaces that can be isometrically embedded into $\M$, but does not settle the compact metric space problem in general.

Two related universality results concern different target spaces. Ivanov--Tuzhilin proved that every bounded metric space can be isometrically embedded into the Gromov--Hausdorff class in which noncompact metric spaces are allowed \cite{ITclass}. Since that target is larger than $\M$, however, their result alone does not imply that every image point has a compact representative. Ishiki--Koshino proved that, on any fixed uncountable compact metrizable space, the space of all metrics inducing the given topology, equipped with the uniform metric, is isometrically universal for the class of compact metric spaces \cite{IK}. Nevertheless, when one passes from the uniform metric to the Gromov--Hausdorff metric, changing the correspondence between points may decrease the distance. Thus, their result does not directly imply our main theorem.

We resolve the compact metric space embedding problem as follows.

\Needspace{6\baselineskip}
\begin{theorem}[Main theorem]\label{thm:main}
For every nonempty compact metric space $(K,d_K)$, there exists an isometric embedding
\[
 \Phi:(K,d_K)\longrightarrow(\M,\dGH).
\]
Moreover, if $D:=\diam K>0$, then one may choose a Cantor space $\Omega$ and a family of metrics $(\rho_x)_{x\in K}$ on $\Omega$, each inducing the given topology on $\Omega$, such that
\[
 \Phi(x)=[(\Omega,\rho_x)]
 \qquad(x\in K).
\]
For all $x,y\in K$, these metrics satisfy
\[
 \diam_{\rho_x}(\Omega)\le76D,
 \qquad
 \dGH\bigl((\Omega,\rho_x),(\Omega,\rho_y)\bigr)=d_K(x,y).
\]
\end{theorem}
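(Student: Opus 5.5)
Via the Kuratowski-type embedding $x\mapsto d_K(x,\cdot)$, the space $K$ sits isometrically inside the space $\cL_D(K)$ of $1$-Lipschitz functions $f\colon K\to[0,D]$ with the uniform distance. So it suffices to build, on one Cantor space $\Omega$, metrics $\rho_f$ ($f\in\cL_D(K)$) inducing the topology of $\Omega$, with $\diam\rho_f\le 76D$ and $\dGH((\Omega,\rho_f),(\Omega,\rho_g))=\|f-g\|_\infty$; then $\Phi(x)=[(\Omega,\rho_{d_K(x,\cdot)})]$. The case $D=0$ is trivial: map the single point to the one-point space. I would first do the finite case and then pass to a limit. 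For finite $K=\{k_1,\dots,k_n\}$, I take a rigid finite ``skeleton'' $S$ with points $s_1,\dots,s_n$ labelled by $K$. Each $s_i$ is given a pendant point $p_i$ at distance $c+f(k_i)$ for a fixed large constant $c$, and the gluing constants are chosen so that every distance involved is a large multiple of $D$ plus a perturbation of size at most $D$; this is where a bound like $76D$ comes from. The upper bound $\dGH\le\|f-g\|_\infty$ comes from the identity correspondence. Its distortion is $\sup|\rho_f-\rho_g|$, which is at most $2\|f-g\|_\infty$, so I must tune the construction so that changing $f$ moves distances by at most $2\|f-g\|$. Concretely, $\rho_f(p_i,p_j)$ is computed through the skeleton, and the $1$-Lipschitz condition on $f$ keeps the triangle inequality valid and keeps each perturbation at most $|f(k_i)-g(k_i)|+|f(k_j)-g(k_j)|$ in the right form.

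The lower bound is the main obstacle: an arbitrary correspondence $R$ must have distortion at least $2\|f-g\|_\infty$. My plan is to make the skeleton \emph{metrically rigid at scale $D$}. The gaps between distinct distance values in $S$ exceed $C D$ for a suitable $C$ of order $19$, so that $C\cdot 4D=76D$ leaves room. Then any correspondence of distortion less than $2D$ must match $s_i$ to $s_i$ and $p_i$ to a point near $p_i$. Since $\|f-g\|\le D$, any correspondence with smaller distortion than $2\|f-g\|$ is therefore forced to be near the identity. Comparing $\rho_f(s_i,p_i)$ with $\rho_g(s_i,p_i)$ then gives distortion at least $|f(k_i)-g(k_i)|$. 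To get the full factor $2$, I would measure each perturbation symmetrically against a reference point placed at distance $c-f$ on the other side, or equivalently double all perturbations while keeping the upper bound. I expect the constant bookkeeping here to be the technical heart of the proof.

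For general compact $K$, I choose finite $\varepsilon_m$-nets $K_m$ with $\varepsilon_m\to0$, nested and arranged in a tree. The Cantor space $\Omega$ is realized as the boundary of this tree, thickened by attaching a fixed Cantor set at each node so that $\Omega$ is independent of $f$. I then define $\rho_f$ as a convergent combination of the finite skeleton metrics at geometrically decreasing scales, or as a uniform limit of $\rho_{f|K_m}$ pulled back to $\Omega$. The upper bound passes to the limit directly. For the lower bound, I take $k$ with $|f(k)-g(k)|$ close to $\|f-g\|$, approximate it by a net point, and run the rigidity argument at the top scale; rigidity persists because the finer levels have smaller diameters. Finally, I check that $\rho_f$ is a metric inducing the Cantor topology, using that it is uniformly comparable to a fixed ultrametric on the tree boundary.
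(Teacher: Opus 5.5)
\textbf{There is a genuine gap: the exact lower bound for infinite $K$.} Your reduction to $\cL_D(K)$ via $f_x=d_K(x,\cdot)$, the one-point case, and the upper bound via the identity correspondence all agree with the paper. What fails is the lower bound $\dGH\ge\|f-g\|_\infty$ once $K$ is infinite.

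A gap-based rigidity argument only constrains a correspondence at scales whose distance gaps exceed its distortion. A correspondence with distortion just below $2\|f-g\|_\infty$, which may be close to $2D$, is indeed forced to respect the top-level pieces of your tree. At level $m$, however, the gaps are of order $\varepsilon_m\ll D$, and nothing in your argument stops it from shuffling points inside a top-level piece. Small diameters at finer levels only ensure that the top-level argument survives the refinement. They do not pin the correspondence down at a near-maximizer of $|f-g|$. Locating the partner of that point only up to a top-level cell leaves an error of order $D$ when you compare cross distances, so you get no useful bound, let alone equality.

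A finite-approximation limit does not rescue this. Your single-scale finite models have $n$ points pairwise separated by a multiple of $D$ inside a set of diameter $O(D)$, so they cannot converge in GH to a compact space as $n\to\infty$. The approximants must therefore be multi-scale, and then the finite case runs into the same problem.

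\textbf{The missing idea is a one-sided estimate whose error is proportional to the current scale; this is the heart of the paper.} Let $f(z_0)-g(z_0)=\delta=\|f-g\|_\infty$, $\pi(\eta)=z_0$, and $\Delta:=\dis R<2\delta$, with $R$ normalized to preserve the two layers. Suppose $(s,t)$ are related in the $+$ layer and $(u,w)$ in the $-$ layer, with all four points in the level-$k$ cylinder of $\eta$; the induction hypothesis guarantees this. The cross-layer distortion inequality, together with $\widehat f(s)-\widehat g(t)\ge\delta-2\gamma_k$ and $\widehat f(u)-\widehat g(w)\ge\delta-2\gamma_k$, then gives $b(s,u)-b(t,w)\le\Delta-2\delta+4\gamma_k<\lambda_k/2$. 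The large budget $\Delta$ is absorbed by the function excess $2\delta$.

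Because this bound is only one-sided, the paper pairs it with two further ingredients. One is a rigid $\{2,3\}$-valued metric on the children at every level. The other is the lemma that a distance-nondecreasing bijection of a finite metric space is an isometry. By induction, the correspondence then maps every cylinder of $\eta$ onto itself in both layers. Hence $(\eta,\pm)$ is related to $(\eta,\pm)$, and the single pair $((\eta,+),(\eta,-))$, at distance $50D+2\widehat f(\eta)$, yields distortion $2\delta$.

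\textbf{Your factor $2$ is also unresolved.} A second reference point at distance $c-f$ gives two pairs, each changing by $|f-g|$, and these do not add inside a supremum. What you need is one pair whose distance contains $f(k_i)+f(k_i)$. This means two copies of the skeleton with cross distances $c+d_S(i,j)+f(k_i)+f(k_j)$, and within-copy distances large enough to keep the triangle inequality. That is exactly the paper's two-layer design.
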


The main theorem follows from the following realization theorem for a Lipschitz function space.

\begin{theorem}\label{thm:lip}
Let $(K,d_K)$ be a nonempty compact metric space, and suppose that $D>0$ and $\diam K\le D$. Define
\[
 \cL_D(K)
 :=\bigl\{f:K\to[0,D]\mid
       |f(x)-f(y)|\le d_K(x,y)\quad(x,y\in K)\bigr\},
\]
and, for every bounded function $h:K\to\mathbb R$, set $\lVert h\rVert_\infty:=\sup_{x\in K}|h(x)|$.
Then there exist a Cantor space $\Omega$, fixed independently of $f$, and a metric $\rho_f$ on $\Omega$ for each $f\in\cL_D(K)$ such that, for all $f,g\in\cL_D(K)$,
\begin{equation}\label{eq:mainlip}
 \dGH\bigl((\Omega,\rho_f),(\Omega,\rho_g)\bigr)
 =\lVert f-g\rVert_\infty
 =\frac12\sup_{\omega,\omega'\in\Omega}
       |\rho_f(\omega,\omega')-\rho_g(\omega,\omega')|.
\end{equation}

All the metrics $\rho_f$ induce the given topology on $\Omega$ and satisfy $\diam_{\rho_f}(\Omega)\le76D$.
In particular, if the identity relation $I_\Omega:=\{(\omega,\omega)\mid \omega\in\Omega\}$ is regarded as a correspondence from $(\Omega,\rho_f)$ to $(\Omega,\rho_g)$, then $\dis I_\Omega=2\dGH\bigl((\Omega,\rho_f),(\Omega,\rho_g)\bigr)$.
\end{theorem}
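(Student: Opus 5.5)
Since the identity correspondence always gives $2\dGH\le\dis I_\Omega=\sup|\rho_f-\rho_g|$, the whole theorem reduces to three tasks: building the metrics $\rho_f$, checking the upper identity $\tfrac12\sup|\rho_f-\rho_g|=\lVert f-g\rVert_\infty$, and proving the lower bound $\dGH\ge\lVert f-g\rVert_\infty$, which is the only hard part.

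\emph{Construction.} Fix a countable dense sequence $(x_n)_{n\ge1}$ in $K$. Let $\Omega$ be a disjoint union of a null sequence of ``gadgets'' $\Omega_n$, each a Cantor set or finite set, accumulating to a base Cantor piece $\Omega_0$; the result is again a Cantor space. On every gadget, $\rho_f$ is rigid, i.e.\ independent of $f$, except for one designated pair of points $a_n,b_n\in\Omega_n$, where we put $\rho_f(a_n,b_n)=c_n+2f(x_n)$. Cross distances are also chosen independent of $f$, large and encoding $n$ (distinct scales $L_n$ bounded by a constant multiple of $D$), with the gadgets shrinking to $\Omega_0$ so that the topology is fixed. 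The constants are tuned so that the triangle inequality holds for every value of $f$ in $[0,D]$; the Lipschitz condition is needed when $f$ must be propagated to approximating points, and accounting for this should give the bound $76D$. Once this holds, $\sup|\rho_f-\rho_g|=2\sup_n|f(x_n)-g(x_n)|=2\lVert f-g\rVert_\infty$ by density and continuity of $f-g$.

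\emph{Lower bound.} Let $R$ be a correspondence with $\dis R<2\lVert f-g\rVert_\infty$, and choose $n$ with $|f(x_n)-g(x_n)|$ close to the supremum. I would show that $R$ must be close to the identity on the gadget $\Omega_n$. The rigid part of each gadget should carry a distinctive distance spectrum, for instance prescribed gaps, so that any correspondence of small distortion has to send $\Omega_n$ near $\Omega_m$ for some $m$ with $x_m$ close to $x_n$, and has to send $a_n,b_n$ near $a_m,b_m$. Then
\[
|\rho_f(a_n,b_n)-\rho_g(a_m,b_m)|\ge 2|f(x_n)-g(x_n)|-2d_K(x_n,x_m)-\text{(error)},
\]
and the scale separation has to force the error together with $d_K(x_n,x_m)$ to be arbitrarily small. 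This is where the Lipschitz hypothesis is essential. Choosing gadget scales that depend on $x_n$ through an embedding of $K$ (for instance Kuratowski coordinates $d_K(x_n,\cdot)$ encoded in the rigid distances) ensures that matching gadget $n$ to gadget $m$ costs at least about $2d_K(x_n,x_m)$. Consequently any mismatch already incurs distortion $\ge 2\lVert f-g\rVert_\infty$, using $|f(x_n)-g(x_m)|\le|f(x_n)-g(x_n)|+d_K(x_n,x_m)$.

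\emph{Main obstacle.} The rigidity step is where I expect the difficulty. One must exclude correspondences that scramble the non-injective, non-isometric Cantor part $\Omega_0$, and one must make the quantitative estimate tight with constant exactly $1$ and no loss. I would attack it first through finite approximations: truncate to gadgets $1,\dots,N$, prove the exact formula there by a combinatorial matching argument on a finite metric space (in the spirit of Iliadis--Ivanov--Tuzhilin), and then pass to the limit using uniform convergence of $\dGH$ as $N\to\infty$.
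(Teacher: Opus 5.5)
Your reduction to the lower bound, with the upper identity coming from the identity correspondence and density, is fine. However, there is a genuine gap, both in the construction and in the lower-bound mechanism.

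\textbf{The construction cannot exist as described.} Since $f\equiv D$ lies in $\cL_D(K)$, you get $\rho_f(a_n,b_n)\ge c_n+2D$, so the gadgets cannot form a null sequence. Moreover, compactness of the fixed topology gives a subsequence with $a_n\to a$, $b_n\to b$, $c_n\to c$, $x_n\to x$. Then $a\ne b$ lie in $\Omega_0$ and $\rho_f(a,b)=c+2f(x)$ for every $f$. So $\rho_f$ is \emph{not} independent of $f$ away from the designated pairs: $\Omega_0$ must itself encode $f$ continuously at the limit points of $(x_n)$, and correspondences may freely mix gadget points with such pairs. The difficulty you defer as ``scrambling $\Omega_0$'' is therefore the heart of the problem, and the gadgets do not reduce it. The paper builds this limit object directly: in $\Omega=S\times\{+,-\}$, every pair $(\eta,+),(\eta,-)$ is a designated pair at distance $50D+2f(\pi(\eta))$.

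\textbf{The lower-bound mechanism loses a factor $2$.} Suppose matching gadget $n$ to gadget $m$ costs about $2d$, where $d:=d_K(x_n,x_m)$, and the designated pairs contribute $|\rho_f(a_n,b_n)-\rho_g(a_m,b_m)|\ge2\delta-2d$. (The inequality $|f(x_n)-g(x_m)|\le|f(x_n)-g(x_n)|+d$ that you quote points the wrong way for a lower bound.) These two facts give only $\dis R\ge\max\{2d,2\delta-2d\}\ge\delta$, i.e.\ $\dGH\ge\frac12\lVert f-g\rVert_\infty$; take $d=\delta/2$ and $g(x_m)=g(x_n)+d$ to see nothing better follows. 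The sharp constant requires that $\dis R<2\delta$ pin the maximizer \emph{exactly}, and no mismatch cost proportional to $d_K$ can do that.

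The missing idea is the paper's one-sided estimate. On the level-$k$ cylinder around $\eta$ with $\pi(\eta)=z_0$, both $\widehat f(s)-\widehat g(t)$ and $\widehat f(u)-\widehat g(w)$ are $\ge\delta-2\gamma_k$. Comparing $((s,+),(t,+))$ with $((u,-),(w,-))$ then yields $b(s,u)-b(t,w)<4\gamma_k=\lambda_k/2$, a slack tied to the scale and independent of $\delta$. Sibling cylinders are $\ge2\lambda_k$ apart and have diameter $\le\frac34\lambda_k$, so this forces a permutation $\sigma$ of the children with $a^{(q_k)}(i,j)\le a^{(q_k)}(\sigma(i),\sigma(j))$. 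A non-contracting bijection of a finite metric space is an isometry, and $a^{(q_k)}$ is rigid, so $\sigma=\id$. Iterating over $k$ gives $(\eta,\eta)\in R_+\cap R_-$, and hence the full discrepancy $2\delta$.

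Your finite-approximation fallback does not bypass this. The exact finite formula needs the same pinning to hold uniformly as the $x_n$ crowd together, and that is precisely the ``combinatorial matching argument'' you leave unspecified.
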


This paper is organized as follows. Section 2 collects the preliminaries on the Gromov--Hausdorff distance and correspondences. Section 3 constructs the hierarchical coding of the given compact metric space and establishes the rigidity properties of correspondences that will be used in the lower-bound argument. In Section 4, we construct a family of metrics on a fixed Cantor space parametrized by Lipschitz functions on the original compact metric space and prove the isometric realization theorem, from which the main theorem follows. Section 5 develops finite approximations of this construction, proves an exact formula for the Gromov--Hausdorff distance between the finite approximating spaces, and derives uniform approximation and covering estimates, together with several immediate consequences.

\section{Preliminaries}

Throughout the paper, all metric spaces are assumed to be nonempty unless stated otherwise. A \emph{Cantor space} means a topological space homeomorphic to the usual middle-third Cantor set; no particular metric is prescribed. For a positive integer $n$, write $[n]:=\{1,\dots,n\}$.
Given a metric space $(X,d)$, a point $x\in X$, a real number $r>0$, and nonempty subsets $A,B\subseteq X$, set
\begin{align*}
 B_d(x,r)&:=\{y\in X\mid d(x,y)<r\},\\
 d(x,A)&:=\inf_{a\in A}d(x,a),\\
 \diam_d(A)&:=\sup_{a,a'\in A}d(a,a'),\\
 \dist_d(A,B)&:=\inf_{a\in A,\,b\in B}d(a,b).
\end{align*}
When the ambient metric is clear from the context, the subscript will be omitted. For $\varepsilon>0$, a nonempty subset $A\subseteq X$ is called an $\varepsilon$-net of $X$ if $\sup_{x\in X}d(x,A)\le\varepsilon$.
We further define
\[
 N_X(\varepsilon)
 :=\min\left\{|A|\mid
 \varnothing\ne A\subseteq X\text{ is finite and }
 \sup_{x\in X}d(x,A)\le\varepsilon\right\},
\]
with the convention $N_X(\varepsilon):=\infty$ if no finite $\varepsilon$-net exists. For finite $A$, the net condition is equivalent to requiring, for every $x\in X$, a point $a\in A$ with $d(x,a)\le\varepsilon$.

\begin{definition}\label{def:gh}
For nonempty compact subsets $A,B$ of a metric space $Z$, their Hausdorff distance is defined by
\[
 d_\mathrm{H}^Z(A,B)
 :=\max\left\{
   \sup_{a\in A}d_Z(a,B),
   \sup_{b\in B}d_Z(b,A)
 \right\}.
\]
For nonempty compact metric spaces $X,Y$, their Gromov--Hausdorff distance is
\[
 \dGH(X,Y)
 :=\inf_{(Z,\varphi,\psi)}
 d_\mathrm{H}^Z\bigl(\varphi(X),\psi(Y)\bigr),
\]
where the infimum is taken over all triples $(Z,\varphi,\psi)$ consisting of a metric space $Z$ and isometric embeddings $\varphi:X\to Z$ and $\psi:Y\to Z$. The space $\M$ is the set of isometry classes of nonempty compact metric spaces equipped with this metric. For isometry classes we also write $\dGH([X],[Y]):=\dGH(X,Y)$, which is independent of the choice of representatives.
\end{definition}

\begin{definition}\label{def:corr}
A \emph{correspondence} between sets $X$ and $Y$ is a relation $R\subseteq X\times Y$ such that
\[
 \{x\in X\mid \exists y\in Y,\ (x,y)\in R\}=X,
 \qquad
 \{y\in Y\mid \exists x\in X,\ (x,y)\in R\}=Y.
\]
The set of all correspondences between $X$ and $Y$ is denoted by $\Corr(X,Y)$. For subsets $A\subseteq X$ and $B\subseteq Y$, put
\[
 R[A]:=\{y\in Y\mid \exists x\in A,\ (x,y)\in R\},
 \qquad
 R^{-1}[B]:=\{x\in X\mid \exists y\in B,\ (x,y)\in R\}.
\]
If $(X,d_X)$ and $(Y,d_Y)$ are metric spaces and $R$ is a correspondence between them, the \emph{distortion} of $R$ is defined by
\[
 \dis R
 :=\sup\bigl\{|d_X(x,x')-d_Y(y,y')|\mid
        (x,y),(x',y')\in R\bigr\}.
\]
\end{definition}

\begin{proposition}[{\cite[Theorem~7.3.25]{BBI}}]
\label{prop:ghcorr}
For nonempty compact metric spaces $X,Y$,
\begin{equation}\label{eq:GHformula}
  2\dGH(X,Y)=\inf_{R\in\Corr(X,Y)}\dis R.
\end{equation}
\end{proposition}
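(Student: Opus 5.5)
The identity is the standard one from \cite[Theorem~7.3.25]{BBI}. I would prove it by establishing the two inequalities separately, working directly from \cref{def:gh} and \cref{def:corr}. Compactness plays no essential role beyond guaranteeing that all quantities are finite.

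\emph{Step 1: $\inf_R \dis R\le 2\dGH(X,Y)$.} Fix $r>\dGH(X,Y)$. Choose a metric space $Z$ and isometric embeddings $\varphi,\psi$ with $d_\mathrm H^Z(\varphi(X),\psi(Y))<r$. Set
\[
R:=\{(x,y)\in X\times Y\mid d_Z(\varphi(x),\psi(y))<r\}.
\]
The Hausdorff bound says every $\varphi(x)$ lies within distance $<r$ of some $\psi(y)$, and vice versa, so $R\in\Corr(X,Y)$. Now take $(x,y),(x',y')\in R$. Since $\varphi$ and $\psi$ are isometric, two applications of the triangle inequality in $Z$ give
\[
|d_X(x,x')-d_Y(y,y')|=|d_Z(\varphi x,\varphi x')-d_Z(\psi y,\psi y')|\le d_Z(\varphi x,\psi y)+d_Z(\varphi x',\psi y')<2r.
\]
Hence $\dis R\le 2r$, and letting $r\downarrow\dGH(X,Y)$ gives this inequality.

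\emph{Step 2: $2\dGH(X,Y)\le\dis R$ for every $R\in\Corr(X,Y)$.} Fix such an $R$ and any $r>\dis R$; I take $r$ strictly larger so that the glued distance below is positive. On the disjoint union $Z:=X\sqcup Y$, keep $d_X$ on $X$ and $d_Y$ on $Y$, and for $x\in X$, $y\in Y$ define
\[
d(x,y)=d(y,x):=\inf_{(x',y')\in R}\bigl(d_X(x,x')+\tfrac r2+d_Y(y',y)\bigr).
\]
Every pair in $R$ is then at distance $\le r/2$, so $d_\mathrm H^Z(X,Y)\le r/2$. Taking $r\downarrow\dis R$ and then the infimum over $R$ finishes the proof, provided $d$ is a metric.

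\emph{Main obstacle: the triangle inequality for $d$.} Symmetry and positivity are immediate, and positivity across $X$ and $Y$ holds because $d(x,y)\ge r/2>0$. The triangle inequality needs a case check. Cases with two points in $X$ and one in $Y$, of the form $d(x,y)\le d_X(x,\tilde x)+d(\tilde x,y)$, follow directly by absorbing $d_X(x,\tilde x)$ into the infimum. The delicate case is
\[
d_X(x_1,x_2)\le d(x_1,y)+d(y,x_2)
\]
with $y\in Y$. Choose near-optimal pairs $(x_1',y_1'),(x_2',y_2')\in R$ in the two infima. Then, using the distortion bound in the middle step,
\[
d_X(x_1,x_2)\le d_X(x_1,x_1')+d_X(x_1',x_2')+d_X(x_2',x_2),\qquad
d_X(x_1',x_2')\le d_Y(y_1',y_2')+\dis R,
\]
and $d_Y(y_1',y_2')\le d_Y(y_1',y)+d_Y(y,y_2')$. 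Since $\dis R<r=2\cdot\tfrac r2$, summing these gives the claim. The symmetric case with two points in $Y$ and one in $X$ is identical.
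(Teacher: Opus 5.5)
Your proof is correct and is essentially the standard argument of \cite[Theorem~7.3.25]{BBI}, which the paper quotes rather than reproves: the relation $\{(x,y)\mid d_Z(\varphi(x),\psi(y))<r\}$ gives $\inf_R\dis R\le2\dGH(X,Y)$, and gluing $X\sqcup Y$ via $\inf_{(x',y')\in R}\bigl(d_X(x,x')+\tfrac r2+d_Y(y',y)\bigr)$ gives the reverse inequality. Taking $r>\dis R$ strictly is a sensible refinement, because it makes the glued function a genuine metric, as \cref{def:gh} requires, even when $\dis R=0$.
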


\section{Hierarchical coding and rigidity of correspondences}

Fix from now on a nonempty compact metric space $(K,d_K)$ and a constant $D>0$ such that $\diam K\le D$. For each $k\ge0$, set $\gamma_k:=D\,4^{-k}$ and $\lambda_k:=8\gamma_k=8D\,4^{-k}$.

\begin{definition}\label{def:covertree}
For a sequence of positive integers $(q_k)_{k\ge0}$, define
\[
 T_0:=\{\varnothing\},
 \qquad
 T_k:=\prod_{j=0}^{k-1}[q_j]\quad(k\ge1).
\]
A family of nonempty compact sets
\[
 (K_v)_{v\in\bigcup_{k\ge0}T_k}
\]
will be called a \emph{labeled covering tree subordinate to $(\gamma_k)$} if each $K_v$ is a subset of $K$, $K_\varnothing=K$, and, for every $k\ge0$, $v\in T_k$, and $i\in[q_k]$,
\[
 \diam(K_v)\le\gamma_k,
 \qquad
 K_v=\bigcup_{j=1}^{q_k}K_{vj},
 \qquad
 \varnothing\ne K_{vi}\subseteq K_v.
\]
Here, for $v=(v_0,\dots,v_{k-1})$, we write $vi:=(v_0,\dots,v_{k-1},i)\in T_{k+1}$.
The underlying rooted tree has vertex set $\bigcup_{k\ge0}T_k$, root $\varnothing$, and edges from $v$ to $vi$. Distinct labels give distinct vertices even when the associated sets coincide; in particular, we allow $K_{vi}=K_{vj}$ for $i\ne j$.
\end{definition}

\begin{proposition}\label{prop:covertree}
There exists a labeled covering tree subordinate to $(\gamma_k)$ such that $q_k\ge7$ for every $k\ge0$.
\end{proposition}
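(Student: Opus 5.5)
We build the tree level by level by induction on $k$, using only compactness of $K$ and the freedom to repeat labels. The induction hypothesis at stage $k$ is that $q_0,\dots,q_{k-1}$ and the nonempty compact sets $K_v$ for $v\in T_0\cup\dots\cup T_k$ have been chosen. These sets should satisfy $\diam(K_v)\le\gamma_{|v|}$ at every level, and the covering and nesting conditions for all $v$ of length less than $k$. The base case is $K_\varnothing:=K$, which satisfies $\diam K\le D=\gamma_0$ by the standing assumption.

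For the inductive step, fix $v\in T_k$. Since $K_v$ is a compact subset of $K$, it is totally bounded. Choose a finite $\gamma_{k+1}/2$-net $\{a_1,\dots,a_m\}\subseteq K_v$ of $K_v$; this is possible because $K_v\neq\varnothing$, so $m\ge1$. Set
\[
 C_{v,i}:=K_v\cap \overline{B}(a_i,\gamma_{k+1}/2)\qquad(i\in[m]),
\]
where $\overline{B}$ denotes the closed ball. Each $C_{v,i}$ is closed in the compact set $K_v$, hence compact. It is nonempty because it contains $a_i$. Its diameter is at most $\gamma_{k+1}$ by the triangle inequality, and the sets $C_{v,i}$ cover $K_v$ by the net property.

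Let $m_v$ be the number of pieces obtained for $v$. Define
\[
 q_k:=\max\Bigl(7,\ \max_{v\in T_k}m_v\Bigr),
\]
which is finite because $T_k$ is a finite set. For each $v\in T_k$, set $K_{vi}:=C_{v,i}$ for $i\le m_v$ and $K_{vi}:=C_{v,1}$ for $m_v<i\le q_k$. The definition explicitly allows repeated sets under distinct labels, so this padding is legitimate. Then $K_v=\bigcup_{j=1}^{q_k}K_{vj}$, each $K_{vi}$ is nonempty and contained in $K_v$, and $\diam K_{vi}\le\gamma_{k+1}$. This completes the inductive step, and the recursion defines the whole family.

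The only delicate point is the order of choices. The number $q_k$ must be uniform over all $v\in T_k$, so all nets at level $k$ are chosen before $q_k$ is fixed. This works because $T_k$ is finite, which itself follows inductively from the finiteness of $q_0,\dots,q_{k-1}$. Padding with repeated copies then yields both the common branching number and the bound $q_k\ge7$. No step is a real obstacle beyond this bookkeeping.
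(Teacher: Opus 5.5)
Your proposal is correct and follows essentially the same route as the paper. For each $K_v$ you cover by finitely many small balls centered in $K_v$, take their compact traces on $K_v$, set $q_k$ to be the maximum of $7$ and the piece counts over the finite level $T_k$, and pad by repeating labels; the only cosmetic differences are that you use closed balls of radius $\gamma_{k+1}/2$ around a finite net (giving $\diam\le\gamma_{k+1}$) where the paper takes closures of open balls of radius $<\gamma_{k+1}/2$, and you pad with copies of the first piece where the paper uses an arbitrary surjection $\theta_v:[q_k]\to[m(v)]$.
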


\begin{proof}
Start with $K_\varnothing:=K$, which satisfies $\diam K_\varnothing\le\gamma_0$. Suppose that the family $(K_v)_{v\in T_k}$ at depth $k$ has already been constructed. For each $v\in T_k$, compactness of $K_v$ allows us to choose a positive integer $m(v)$, points $x_{v,\ell}\in K_v$, and radii $0<r_{v,\ell}<\gamma_{k+1}/2$ for $1\le\ell\le m(v)$ such that
\[
 K_v\subseteq\bigcup_{\ell=1}^{m(v)}
 B_{d_K}(x_{v,\ell},r_{v,\ell}).
\]
Set
\[
 L_{v,\ell}
 :=\overline{K_v\cap B_{d_K}(x_{v,\ell},r_{v,\ell})}^{\,K_v}
 \qquad(1\le\ell\le m(v)).
\]
Each $L_{v,\ell}$ is a nonempty compact set, and
\[
 K_v=\bigcup_{\ell=1}^{m(v)}L_{v,\ell},
 \qquad
 \diam(L_{v,\ell})\le2r_{v,\ell}<\gamma_{k+1}.
\]
Since $T_k$ is finite, we may set $q_k:=\max\left\{7,\max_{v\in T_k}m(v)\right\}<\infty$.
For each $v\in T_k$, choose a surjection $\theta_v:[q_k]\longrightarrow[m(v)]$ and define $K_{vi}:=L_{v,\theta_v(i)}$ for $i\in[q_k]$.
The surjectivity of $\theta_v$ ensures that the children cover $K_v$, and each child is nonempty with diameter less than $\gamma_{k+1}$. This completes the inductive construction.
\end{proof}

Fix once and for all a covering tree provided by \cref{prop:covertree}.

\begin{definition}\label{def:boundary}
The boundary, that is, the set of infinite branches of the rooted tree from \cref{def:covertree}, is $S:=\prod_{k=0}^{\infty}[q_k]$, with each factor given the discrete topology and $S$ given the product topology. For $\alpha,\beta,\xi,\dots\in S$, the symbols $\alpha_j,\beta_j,\xi_j,\dots$ denote the corresponding $j$th coordinates. In particular, for $\alpha=(\alpha_0,\alpha_1,\dots)\in S$, set $\alpha|_0:=\varnothing$, and for $k\ge1$ write $\alpha|_k:=(\alpha_0,\dots,\alpha_{k-1})\in T_k$.
For $v\in T_k$, define the \emph{cylinder set} $C_v:=\{\alpha\in S\mid \alpha|_k=v\}$.
Then, for every $k\ge0$, $v\in T_k$, and $\alpha\in S$,
\begin{equation}\label{eq:cylinder-identities}
 C_\varnothing=S,
 \qquad
 C_v=\bigsqcup_{i=1}^{q_k}C_{vi},
 \qquad
 \bigcap_{m=0}^{\infty}C_{\alpha|_m}=\{\alpha\}.
\end{equation}
For distinct $\alpha,\beta\in S$, define their first disagreement level by $\ell(\alpha,\beta):=\min\{k\ge0\mid \alpha_k\ne\beta_k\}$.
\end{definition}

\begin{proposition}\label{prop:S-cantor}
With the product topology, $S$ is homeomorphic to the Cantor space.
\end{proposition}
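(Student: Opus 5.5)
We prove that $S=\prod_{k\ge0}[q_k]$ is homeomorphic to the Cantor space by appealing to Brouwer's characterization: a nonempty, compact, metrizable, totally disconnected topological space without isolated points is homeomorphic to the middle-third Cantor set. We check these properties one at a time.

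First, compactness comes from Tychonoff's theorem, because every factor $[q_k]$ is finite and discrete and hence compact. The space is nonempty since each $q_k\ge7\ge1$. For metrizability, we either observe that $S$ is a countable product of metrizable spaces, or we write down an explicit metric such as $d_S(\alpha,\beta):=2^{-\ell(\alpha,\beta)}$ for $\alpha\ne\beta$ and $d_S(\alpha,\alpha)=0$. This $d_S$ is an ultrametric. Its open balls are precisely the cylinder sets $C_v$, so it induces the product topology; the reason is that the cylinders $C_{\alpha|_m}$ form a neighbourhood base at $\alpha$ in the product topology, which is consistent with \eqref{eq:cylinder-identities}.

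Next, total disconnectedness. Every cylinder $C_v$ is closed as well as open, because its complement is the finite union of the other cylinders of the same depth, using the partition in \eqref{eq:cylinder-identities}. Take distinct $\alpha,\beta$ and put $k=\ell(\alpha,\beta)$. Then $C_{\alpha|_{k+1}}$ is a clopen set that contains $\alpha$ and does not contain $\beta$. It follows that no connected subset of $S$ can contain two distinct points.

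Finally, we show there are no isolated points. Let $\alpha$ and $m$ be arbitrary. Since $q_m\ge2$, we can define $\beta$ to agree with $\alpha$ except in coordinate $m$, where it takes a different value. Then $\beta\in C_{\alpha|_m}\setminus\{\alpha\}$. The cylinders $C_{\alpha|_m}$ form a neighbourhood base at $\alpha$, so $\alpha$ is not isolated. With all hypotheses verified, Brouwer's theorem yields the result.

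No step here is a real obstacle; the only point needing care is confirming that the cylinders form a base of the product topology. If one would rather not cite Brouwer, an alternative is to build a homeomorphism explicitly. Each $[q_k]$ can be identified with a clopen partition of a Cantor set into $q_k$ pieces, and these pieces can be refined recursively to give a continuous bijection from the compact space $S$ onto a Hausdorff space. Citing Brouwer is the shorter route.
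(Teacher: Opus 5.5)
Your proposal is correct and follows the same route as the paper: verify that $S$ is nonempty, compact, metrizable, zero-dimensional (your clopen cylinder base gives this directly) and has no isolated points because $q_k\ge2$, then apply Brouwer's characterization. Your write-up simply fills in the routine checks that the paper states in one line.
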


\begin{proof}
Each $[q_k]$ is a finite discrete space containing at least two points. Hence $S$ is nonempty, compact, metrizable, zero-dimensional, and has no isolated points. Brouwer's characterization of Cantor space \cite[Theorem~7.4]{Kechris} therefore applies.
\end{proof}

\begin{proposition}\label{prop:pi}
For each $\alpha\in S$, define $\pi(\alpha)$ by
\begin{equation}\label{eq:pi}
 \{\pi(\alpha)\}:=\bigcap_{k=0}^{\infty}K_{\alpha|_k}.
\end{equation}
Then $\pi:S\to K$ is a continuous surjection. Moreover, if $\alpha,\beta\in S$ are distinct and $k:=\ell(\alpha,\beta)$, then $d_K(\pi(\alpha),\pi(\beta))\le\gamma_k$.
\end{proposition}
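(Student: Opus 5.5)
We follow the standard argument for coding maps. First we check that $\pi$ is well defined. For fixed $\alpha\in S$, the sets $K_{\alpha|_k}$ form a decreasing sequence of nonempty compact subsets of $K$. Indeed, $K_{\alpha|_{k+1}}=K_{(\alpha|_k)\alpha_k}\subseteq K_{\alpha|_k}$ by \cref{def:covertree}. By Cantor's intersection theorem, $\bigcap_k K_{\alpha|_k}$ is nonempty. Since $\diam K_{\alpha|_k}\le\gamma_k=D4^{-k}\to0$, the intersection has diameter $0$. It is therefore a singleton, and \eqref{eq:pi} defines $\pi(\alpha)$ unambiguously.

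Next we prove the distance estimate, from which continuity follows. Let $\alpha\ne\beta$ and $k=\ell(\alpha,\beta)$. Then $\alpha_j=\beta_j$ for $j<k$, so $\alpha|_k=\beta|_k=:v\in T_k$; for $k=0$ both equal $\varnothing$. By \eqref{eq:pi}, both $\pi(\alpha)$ and $\pi(\beta)$ lie in $K_v$. Hence
\[
 d_K(\pi(\alpha),\pi(\beta))\le\diam K_v\le\gamma_k .
\]
For continuity, fix $\alpha$ and $\varepsilon>0$, and choose $k$ with $\gamma_k<\varepsilon$. The cylinder $C_{\alpha|_k}$ is an open neighbourhood of $\alpha$ in the product topology. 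Any $\beta\ne\alpha$ in it satisfies $\ell(\alpha,\beta)\ge k$, so $d_K(\pi(\alpha),\pi(\beta))\le\gamma_{\ell(\alpha,\beta)}\le\gamma_k<\varepsilon$.

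Finally we prove surjectivity, which is the only step requiring a construction. Given $x\in K$, we build $\alpha$ inductively so that $x\in K_{\alpha|_k}$ for all $k$. This holds at $k=0$ since $K_\varnothing=K$. If $x\in K_v$ with $v=\alpha|_k$, the covering identity $K_v=\bigcup_{j=1}^{q_k}K_{vj}$ yields some $i\in[q_k]$ with $x\in K_{vi}$, and we set $\alpha_k:=i$. Choosing such an $i$ at every level uses only countably many choices; for definiteness we take the least such $i$. The resulting $\alpha\in S$ satisfies $x\in\bigcap_kK_{\alpha|_k}=\{\pi(\alpha)\}$, so $\pi(\alpha)=x$.

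No step is a real obstacle. The only points needing care are that well-definedness requires both nonemptiness (by compactness) and the shrinking diameters, and that in the estimate the shared prefix is $\alpha|_k$ with exactly $k=\ell(\alpha,\beta)$ coordinates, which is what gives the bound $\gamma_k$ rather than $\gamma_{k+1}$.
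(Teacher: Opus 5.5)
Your proposal is correct and follows essentially the same argument as the paper: nested nonempty compact sets with shrinking diameters give well-definedness, the shared prefix $\alpha|_k=\beta|_k$ gives the $\gamma_k$ bound and hence continuity on cylinders, and surjectivity comes from inductively choosing children that contain the given point. The only difference is the order in which you present these steps, which is immaterial.
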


\begin{proof}
For each $\alpha\in S$, the sequence $(K_{\alpha|_k})_{k\ge0}$ is decreasing and consists of nonempty compact sets, while $\diam K_{\alpha|_k}\le\gamma_k\to0$. Hence their intersection consists of exactly one point, so \eqref{eq:pi} defines a map $\pi:S\to K$.

Take any $z\in K$. Since $K_\varnothing=K$ and $K_v=\bigcup_{i=1}^{q_k}K_{vi}$, one can inductively choose a sequence $(i_k)_{k\ge0}$ such that
\[
 i_k\in[q_k],\qquad
 z\in K_{(i_0,\dots,i_k)}
 \quad(k\ge0).
\]
If $\alpha:=(i_k)_{k\ge0}\in S$, then $z\in K_{\alpha|_k}$ for every $k$, and therefore \eqref{eq:pi} gives $\pi(\alpha)=z$. Thus $\pi$ is surjective. If $\alpha,\beta\in S$ are distinct and $k:=\ell(\alpha,\beta)$, then $\pi(\alpha),\pi(\beta)\in K_{\alpha|_k}=K_{\beta|_k}$, which yields the asserted estimate. Finally, given $\varepsilon>0$, choose $k$ such that $\gamma_k<\varepsilon$. If $\beta\in C_{\alpha|_k}$, then
$d_K(\pi(\alpha),\pi(\beta))\le\gamma_k<\varepsilon$. Hence $\pi$ is continuous.
\end{proof}

\begin{definition}\label{def:rigid}
Write $\Isom(X)$ for the group of bijective self-isometries of a metric space $X$. The space $X$ is called \emph{rigid} if its only bijective self-isometry is the identity; equivalently, $\Isom(X)=\{\id_X\}$.
\end{definition}

\begin{lemma}\label{lem:rigidmetric}
For every integer $q\ge7$, there exists a rigid metric $a^{(q)}$ on $[q]$ such that
\[
 a^{(q)}(i,i)=0,
 \qquad
 a^{(q)}(i,j)\in\{2,3\}\quad(i\ne j).
\]
\end{lemma}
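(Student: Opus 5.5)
Any function on pairs of distinct points with values in $\{2,3\}$ is automatically a metric, since $2+2\ge3$ gives the triangle inequality. So the problem reduces to a purely combinatorial one. Given a graph $G$ on $[q]$, set $a^{(q)}(i,j)=2$ if $ij$ is an edge of $G$ and $a^{(q)}(i,j)=3$ for distinct non-adjacent $i,j$. A bijection of $[q]$ is an isometry of $a^{(q)}$ exactly when it preserves adjacency. Hence $a^{(q)}$ is rigid if and only if $G$ is an asymmetric graph, i.e.\ $\Aut(G)$ is trivial. It therefore suffices to exhibit, for every $q\ge7$, an asymmetric graph on $q$ vertices. Asymmetric graphs exist for $q\ge6$ but not for $2\le q\le5$; the hypothesis $q\ge7$ is comfortably within range.

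I would give an explicit family rather than cite the asymptotic result of Erd\H{o}s--R\'enyi. For $q\ge7$, take the path $1-2-\cdots-(q-1)$ and attach vertex $q$ by a single edge to vertex $3$. This is a tree with exactly three leaves: $1$, $q-1$ and $q$. The leaf $q$ is at distance $1$ from the unique vertex of degree $3$ (namely vertex $3$), leaf $1$ is at distance $2$, and leaf $q-1$ is at distance $q-4\ge3$.

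An automorphism $\sigma$ fixes the unique degree-$3$ vertex and preserves distances to it. Since the three leaves have pairwise distinct distances $1,2,q-4$, $\sigma$ fixes each leaf. It must then fix every vertex on the unique geodesics from the branch vertex to the leaves, and these geodesics cover the whole tree. So $\sigma=\id$.

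The main point needing care is the distinctness of the three leg lengths, which uses $q-4\ge3$, i.e.\ $q\ge7$; this is precisely where the hypothesis enters. The remaining steps (the triangle inequality for $\{2,3\}$-valued distances, and the equivalence between isometries and graph automorphisms) are routine verifications.
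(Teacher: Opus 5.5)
Your proposal is correct and is essentially the paper's proof. Your tree, the path $1-2-\cdots-(q-1)$ with $q$ attached to vertex $3$, is exactly the paper's $G_q$: three legs of lengths $1$, $2$, $q-4$ at the unique degree-$3$ vertex. The rigidity argument is also the same: the center is fixed, the distinct leg lengths (this is where $q\ge7$ is used) fix each branch pointwise, and the $\{2,3\}$-valued encoding turns isometries into graph automorphisms.
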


\begin{proof}
For a graph $G$, let $\Aut(G)$ denote its automorphism group. Let $G_q$ be the tree obtained by attaching to a single central vertex three paths having respectively $1$, $2$, and $q-4$ edges, with pairwise disjoint sets of vertices away from the center. The number of vertices is $1+1+2+(q-4)=q$.
Since $q\ge7$, the three path lengths are distinct, and the center is the unique vertex of degree $3$. Hence every automorphism fixes the center and maps each branch to itself. Moreover, each vertex on a branch is uniquely determined by its graph distance from the center. Thus $\Aut(G_q)=\{\id_{G_q}\}$.

Identify $[q]$ with the vertex set of $G_q$, and define, for $i,j\in[q]$,
\[
 a^{(q)}(i,j):=
 \begin{cases}
  0,&i=j,\\
  2,&i\ne j\text{ and }\{i,j\}\text{ is an edge of }G_q,\\
  3,&i\ne j\text{ and }\{i,j\}\text{ is not an edge of }G_q.
 \end{cases}
\]
For three distinct points, every distance is either $2$ or $3$, so the triangle inequality follows from $a^{(q)}(i,j)\le3<4\le a^{(q)}(i,k)+a^{(q)}(k,j)$. The off-diagonal pairs at distance $2$ are exactly the edges of $G_q$. Therefore $\Isom([q],a^{(q)})=\Aut(G_q)=\{\id_{[q]}\}$.
Hence $([q],a^{(q)})$ is rigid.
\end{proof}

\begin{lemma}\label{lem:nondecreasing}
Let $(X,d)$ be a finite metric space, and let $\sigma:X\to X$ be a bijection satisfying
\[
 d(x,y)\le d(\sigma(x),\sigma(y))
 \qquad(x,y\in X).
\]
Then $\sigma$ is an isometry. Consequently, if $X$ is rigid, then $\sigma=\id_X$.
\end{lemma}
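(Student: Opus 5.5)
Since $X$ is finite, the standard argument is to compare the sums of all pairwise distances, which $\sigma$ merely permutes. Write $P$ for the finite set of unordered pairs $\{x,y\}$ with $x\ne y$ in $X$. Because $\sigma$ is a bijection of $X$, the induced map $\{x,y\}\mapsto\{\sigma(x),\sigma(y)\}$ is a bijection of $P$ onto itself. Hence
\[
 \sum_{\{x,y\}\in P} d(\sigma(x),\sigma(y))=\sum_{\{x,y\}\in P} d(x,y).
\]

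The hypothesis gives the termwise inequality $d(x,y)\le d(\sigma(x),\sigma(y))$ for every pair. So the difference
\[
 \sum_{\{x,y\}\in P}\bigl(d(\sigma(x),\sigma(y))-d(x,y)\bigr)
\]
is a finite sum of nonnegative terms that equals zero. Therefore every term vanishes, and $d(\sigma(x),\sigma(y))=d(x,y)$ for all $x\ne y$. The case $x=y$ is trivial. Thus $\sigma$ is a bijective self-isometry, that is, $\sigma\in\Isom(X)$.

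If in addition $X$ is rigid, then $\Isom(X)=\{\id_X\}$ by \cref{def:rigid}, so $\sigma=\id_X$.

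The only point requiring care is the reindexing step: we must check that $\sigma$ induces a bijection on unordered pairs of distinct points. This holds because $\sigma$ is injective, so distinct points are sent to distinct points, and because $\sigma^{-1}$ induces the inverse map on pairs. Finiteness of $X$ is essential here, since it makes both sums finite and allows the cancellation. No earlier results of the paper beyond the definition of rigidity are needed.
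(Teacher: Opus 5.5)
Your proof is correct and is essentially the paper's argument: both compare the total sum of distances, which the bijection $\sigma$ merely permutes, and then use that a finite sum of nonnegative differences equal to zero forces every difference to vanish. The only difference is cosmetic — you sum over unordered pairs of distinct points, while the paper sums over all ordered pairs in $X^2$.
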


\begin{proof}
Summing over $X^2$ and using the bijectivity of $\sigma$, we obtain $\sum_{(x,y)\in X^2}d(x,y)=\sum_{(x,y)\in X^2}d(\sigma(x),\sigma(y))$.
For every $(x,y)\in X^2$ we have
$d(\sigma(x),\sigma(y))-d(x,y)\ge0$, and the sum of these nonnegative quantities is $0$. Hence
$d(x,y)=d(\sigma(x),\sigma(y))$ for every $(x,y)\in X^2$.
\end{proof}

For each $k\ge0$, fix one of the rigid metrics $a^{(q_k)}$ supplied by \cref{lem:rigidmetric}.

\begin{definition}\label{def:bmetric}
For $\alpha,\beta\in S$, set $b(\alpha,\beta):=0$ if $\alpha=\beta$. If $\alpha\ne\beta$, let $k:=\ell(\alpha,\beta)$ and define
\begin{equation}\label{eq:bmetric}
 b(\alpha,\beta)
 :=\lambda_k a^{(q_k)}(\alpha_k,\beta_k).
\end{equation}
\end{definition}

\begin{proposition}\label{prop:compactindex}
The function $b$ is a metric on $S$, and the topology induced by $b$ coincides with the product topology. Thus $(S,b)$ is a compact metric space homeomorphic to the Cantor space. Moreover, for every $k\ge0$, $v\in T_k$, and distinct $i,j\in[q_k]$,
\begin{align}
 \diam_b(C_v)&\le3\lambda_k,\label{eq:cylbounds}\\
 b(\alpha,\beta)&=\lambda_k a^{(q_k)}(i,j)
   &&(\alpha\in C_{vi},\ \beta\in C_{vj}),\label{eq:childcross}\\
 d_K(\pi(\alpha),\pi(\beta))&\le\frac1{16}b(\alpha,\beta)
   &&(\alpha,\beta\in S),\label{eq:pilip}\\
 \diam_b(S)&\le24D.\label{eq:bdiam}
\end{align}
\end{proposition}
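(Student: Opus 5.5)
We first show that $b$ is an ultrametric-type metric and then derive the four estimates directly from the definition \eqref{eq:bmetric}. Symmetry and positivity are immediate from the symmetry of $a^{(q_k)}$ and from $a^{(q_k)}(i,j)\ge2$ for $i\ne j$. For the triangle inequality, take distinct $\alpha,\beta,\xi$, and put $k:=\ell(\alpha,\beta)$, $m:=\ell(\alpha,\xi)$, $n:=\ell(\xi,\beta)$. We split into cases according to these levels.

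\emph{Case $m<k$.} Then $\xi$ leaves $\alpha$ strictly before $\alpha$ and $\beta$ separate. Hence $b(\alpha,\xi)\ge2\lambda_m\ge8\lambda_k>3\lambda_k\ge b(\alpha,\beta)$, using $\lambda_m\ge4\lambda_k$. The case $n<k$ is symmetric.

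\emph{Case $m,n\ge k$.} Since $\xi_k$ cannot equal both $\alpha_k$ and $\beta_k$, at least one of $m,n$ equals $k$. If both equal $k$, the inequality is the triangle inequality for $a^{(q_k)}$ at the coordinates $\alpha_k,\beta_k,\xi_k$, multiplied by $\lambda_k$. If, say, $m=k<n$, then $\xi_k=\beta_k$, so $b(\alpha,\xi)=b(\alpha,\beta)$ and the inequality is trivial.

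The key numerical fact throughout is $2\cdot4>3$, i.e.\ $2\lambda_{k-1}>3\lambda_k$. The same bound gives $b(\alpha,\beta)\le3\lambda_{\ell(\alpha,\beta)}$. The estimates then follow.

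\emph{\eqref{eq:childcross}.} If $\alpha\in C_{vi}$ and $\beta\in C_{vj}$ with $i\ne j$, then $\ell(\alpha,\beta)=k$ and $(\alpha_k,\beta_k)=(i,j)$, so \eqref{eq:childcross} is the definition.

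\emph{\eqref{eq:cylbounds}.} Two distinct points of $C_v$ disagree first at some level $\ell\ge k$, so $b\le3\lambda_\ell\le3\lambda_k$.

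\emph{\eqref{eq:bdiam}.} This is the case $k=0$ of \eqref{eq:cylbounds}: $3\lambda_0=24D$.

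\emph{\eqref{eq:pilip}.} By \cref{prop:pi}, $d_K(\pi(\alpha),\pi(\beta))\le\gamma_k$, while $b(\alpha,\beta)\ge2\lambda_k=16\gamma_k$.

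For the topology, we compare $b$-balls with cylinders. Since $b(\alpha,\beta)\le3\lambda_k$ whenever $\beta\in C_{\alpha|_k}$, each cylinder $C_{\alpha|_k}$ lies in $B_b(\alpha,r)$ once $3\lambda_k<r$; this is possible because $\lambda_k\to0$. Conversely, if $\beta\notin C_{\alpha|_k}$, then $\ell(\alpha,\beta)<k$, so $b(\alpha,\beta)\ge2\lambda_{k-1}$. Hence $B_b(\alpha,2\lambda_{k-1})\subseteq C_{\alpha|_k}$ for $k\ge1$. Cylinders form a base for the product topology, so the two topologies agree. Compactness and the homeomorphism with Cantor space then follow from \cref{prop:S-cantor}.

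The only step needing care is the case analysis in the triangle inequality. It reduces to checking that a coarser-level disagreement always dominates, i.e.\ $2\lambda_{k-1}>3\lambda_k$, together with the observation that the coordinates $\alpha_k,\beta_k,\xi_k$ form a triangle in $([q_k],a^{(q_k)})$ (possibly degenerate, when two of them coincide).
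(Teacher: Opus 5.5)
Your proof is correct and is essentially the paper's argument. Your split by the pairwise levels $\ell(\alpha,\beta),\ell(\alpha,\xi),\ell(\xi,\beta)$ is a relabelling of the paper's split by the first level at which the three points do not all agree (three distinct coordinates versus two), resting on the same inequality $2\lambda_{k-1}>3\lambda_k$; your four estimates and your ball--cylinder comparison for the topology match the paper's. Note that calling $b$ ``ultrametric-type'' is loose, since three points splitting at level $k$ with $a^{(q_k)}$-values $2,2,3$ violate the strong triangle inequality, but nothing in your argument uses that property.
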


\begin{proof}
Symmetry and definiteness follow immediately from the definition. To verify the triangle inequality, take three distinct points $\alpha,\beta,\xi\in S$ and put
\[
 k:=\min\{j\ge0\mid |\{\alpha_j,\beta_j,\xi_j\}|\ge2\}.
\]
If $|\{\alpha_k,\beta_k,\xi_k\}|=3$, then the first disagreement level of each of the three pairs is $k$. Hence, for example,
\[
 b(\alpha,\beta)
 =\lambda_k a^{(q_k)}(\alpha_k,\beta_k)
 \le\lambda_k\bigl(
 a^{(q_k)}(\alpha_k,\xi_k)
 +a^{(q_k)}(\xi_k,\beta_k)\bigr)
 =b(\alpha,\xi)+b(\xi,\beta).
\]
Cyclically permuting $\alpha,\beta,\xi$ gives the other two triangle inequalities.

Suppose instead that $|\{\alpha_k,\beta_k,\xi_k\}|=2$. After relabelling the points if necessary, we may assume $\alpha_k=\beta_k=:i$ and $\xi_k=:j\ne i$.
Then
\[
 b(\alpha,\xi)=b(\beta,\xi)
 =\lambda_k a^{(q_k)}(i,j)\ge2\lambda_k,
 \qquad
 b(\alpha,\beta)\le3\lambda_{k+1}=\frac34\lambda_k.
\]
Therefore
\[
 b(\alpha,\beta)
 \le b(\alpha,\xi)+b(\xi,\beta),
\]
and the remaining two triangle inequalities follow from
$b(\alpha,\xi)=b(\beta,\xi)$ and $b(\alpha,\beta)\ge0$. Thus $b$ is a metric.

\eqref{eq:cylbounds} and \eqref{eq:childcross} follow directly from \eqref{eq:bmetric}. Moreover, for $k\ge1$ and $v\in T_k$, $\dist_b(C_v,S\setminus C_v)\ge2\lambda_{k-1}>0$, so every cylinder is both open and closed in the $b$-topology. Conversely, given $\alpha\in S$ and $r>0$, choose $k$ such that $3\lambda_k<r$. Then $C_{\alpha|_k}\subseteq B_b(\alpha,r)$.
Since the cylinder sets form a basis for the product topology and, by the two preceding observations, also form a basis for the $b$-topology, the two topologies coincide.

For distinct $\alpha,\beta\in S$ with $k:=\ell(\alpha,\beta)$, \cref{prop:pi} and the inequality $a^{(q_k)}(\alpha_k,\beta_k)\ge2$ give
\[
 b(\alpha,\beta)\ge2\lambda_k=16\gamma_k,
 \qquad
 d_K(\pi(\alpha),\pi(\beta))\le\gamma_k.
\]
This implies \eqref{eq:pilip}; when $\alpha=\beta$, both sides of \eqref{eq:pilip} vanish. Finally, applying \eqref{eq:cylbounds} to $v=\varnothing$ yields $\diam_b(S)\le3\lambda_0=24D$.
\end{proof}

\begin{lemma}\label{lem:cylinder-rigidity}
Let $k\ge0$ and $v\in T_k$, and suppose that $E\subseteq C_v$ satisfies
\[
 E_i:=E\cap C_{vi}\ne\varnothing
 \qquad(i\in[q_k]).
\]
If $P,Q\in\Corr(E,E)$ satisfy
\begin{equation}\label{eq:onesided}
 \sup_{\substack{(s,t)\in P\\(u,w)\in Q}}
 \bigl(b(s,u)-b(t,w)\bigr)<\frac{\lambda_k}{2},
\end{equation}
then, for every $i\in[q_k]$,
\begin{align*}
 P[E_i]&=E_i=P^{-1}[E_i],\\
 Q[E_i]&=E_i=Q^{-1}[E_i].
\end{align*}
Consequently,
\[
 P\cap(E_i\times E_i),\quad Q\cap(E_i\times E_i)
 \in\Corr(E_i,E_i).
\]
\end{lemma}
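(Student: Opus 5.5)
The plan is to reduce the statement to a combinatorial statement about the finite rigid space $([q_k],a^{(q_k)})$ and then apply \cref{lem:nondecreasing}. Write $q:=q_k$ and $a:=a^{(q_k)}$. By \eqref{eq:cylbounds} and \eqref{eq:childcross}, distances inside $E\subseteq C_v$ fall into two regimes:
\begin{itemize}
\item if $s,u$ lie in the same block $E_i$, then $b(s,u)\le3\lambda_{k+1}=\tfrac34\lambda_k$;
\item if $s\in E_i$ and $u\in E_{i'}$ with $i\ne i'$, then $b(s,u)=\lambda_k a(i,i')\in\{2\lambda_k,3\lambda_k\}$.
\end{itemize}
I will encode $P$ and $Q$ by their block relations
\[
R_P:=\{(i,j)\in[q]^2\mid P\cap(E_i\times E_j)\ne\varnothing\},
\]
and $R_Q$ defined in the same way. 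Since $E=\bigsqcup_i E_i$, every $E_i$ is nonempty, and $P,Q$ are correspondences, both $R_P$ and $R_Q$ are correspondences on $[q]$.

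\textbf{Key inequality.} Suppose $(i,j)\in R_P$ and $(i',j')\in R_Q$ with $i\ne i'$. Pick witnesses $(s,t)\in P$ with $s\in E_i$, $t\in E_j$, and $(u,w)\in Q$ with $u\in E_{i'}$, $w\in E_{j'}$. By \eqref{eq:onesided},
\[
b(t,w)>b(s,u)-\tfrac{\lambda_k}{2}\ge2\lambda_k-\tfrac{\lambda_k}{2}=\tfrac32\lambda_k>\tfrac34\lambda_k .
\]
Hence $j\ne j'$, so $b(t,w)=\lambda_k a(j,j')$. Then \eqref{eq:onesided} gives $a(j,j')>a(i,i')-\tfrac12$. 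Because $a$ takes only the values $2$ and $3$ off the diagonal, this forces $a(j,j')\ge a(i,i')$. In particular, contrapositively: if $(i,j)\in R_P$ and $(i',j)\in R_Q$, then $i=i'$.

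\textbf{Block relations are a permutation.} Fix $j$. It has at least one $R_Q$-preimage $i'$, and by the contrapositive every $R_P$-preimage of $j$ equals $i'$. Symmetrically, every $R_Q$-preimage equals any $R_P$-preimage. So there is a single index $h(j)$ with
\[
R_P^{-1}(j)=R_Q^{-1}(j)=\{h(j)\}.
\]
The map $h$ is surjective, since every $i$ is $R_P$-related to some $j$, and then $i=h(j)$. As $[q]$ is finite, $h$ is bijective. Put $\sigma:=h^{-1}$; then $R_P=R_Q=\{(i,\sigma(i))\}$. Applying the key inequality to $(i,\sigma(i))\in R_P$ and $(i',\sigma(i'))\in R_Q$ gives $a(i,i')\le a(\sigma(i),\sigma(i'))$ for all $i,i'$. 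Since $([q],a)$ is rigid by \cref{lem:rigidmetric}, \cref{lem:nondecreasing} yields $\sigma=\id$.

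\textbf{Conclusion.} Therefore $P,Q\subseteq\bigcup_i E_i\times E_i$, which gives $P[E_i]\subseteq E_i$ and $P^{-1}[E_i]\subseteq E_i$. The reverse inclusions hold because $P$ is a correspondence on $E$: every point of $E_i$ has a partner, and that partner must lie in $E_i$. The same argument applies to $Q$. It follows that $P\cap(E_i\times E_i)$ covers $E_i$ on both sides, so it lies in $\Corr(E_i,E_i)$, and likewise for $Q$.

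The main obstacle I expect is the step showing that the block relation is a function. Since \eqref{eq:onesided} only couples $P$ with $Q$, the single-valuedness must come from mixing the two relations, as in the contrapositive above, rather than from $P$ alone.
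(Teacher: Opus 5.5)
Your proof is correct and follows essentially the same route as the paper: the one-sided bound forces $P$ and $Q$ to induce a common bijection $\sigma$ of the child blocks, integrality of $a^{(q_k)}$ upgrades the bound to $a^{(q_k)}(i,j)\le a^{(q_k)}(\sigma(i),\sigma(j))$, and \cref{lem:nondecreasing} together with rigidity gives $\sigma=\id$. Encoding $P,Q$ as block relations $R_P,R_Q$ with a single key inequality is only a more compact bookkeeping of the paper's separate steps.
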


\begin{proof}

Set $\lambda:=\lambda_k$. By \eqref{eq:cylinder-identities}, $E=\bigsqcup_{i=1}^{q_k}E_i$, and \cref{prop:compactindex} implies that, for any $i\ne j$, $x,x'\in E_i$, and $y\in E_j$, we have $b(x,x')\le\frac34\lambda$ and $b(x,y)=\lambda a^{(q_k)}(i,j)\ge2\lambda$.

Fix $j\in[q_k]$. Since the second projection of $Q$ is surjective, choose $(u_0,w_0)\in Q$ with $w_0\in E_j$. Suppose also that $(s,t)\in P$ with $t\in E_j$. Write $s\in E_i$ and $u_0\in E_{i'}$. If $i\ne i'$, then these bounds give $b(s,u_0)-b(t,w_0)\ge2\lambda-\frac34\lambda=\frac54\lambda$, contradicting \eqref{eq:onesided}. Hence $P^{-1}[E_j]$ is contained in a single $E_i$.

Since the second projection of $P$ is also surjective, choose $(s_0,t_0)\in P$ with $t_0\in E_j$. For any $(u,w)\in Q$ with $w\in E_j$, write $s_0\in E_i$ and $u\in E_{i'}$. If $i\ne i'$, then the same bounds give $b(s_0,u)-b(t_0,w)\ge2\lambda-\frac34\lambda=\frac54\lambda$, which contradicts \eqref{eq:onesided}. Thus there exists a unique $\psi(j)\in[q_k]$ such that $P^{-1}[E_j]\cup Q^{-1}[E_j]\subseteq E_{\psi(j)}$.

For arbitrary $i\in[q_k]$ and $s\in E_i$, surjectivity of the first projection of $P$ gives $(s,t)\in P$ for some $t\in E$. If $t\in E_j$, then this containment and disjointness of the partition imply $i=\psi(j)$. Therefore $\psi:[q_k]\to[q_k]$ is surjective and hence, since the set is finite, bijective. Put $\sigma:=\psi^{-1}$. Then $P[E_i]\cup Q[E_i]\subseteq E_{\sigma(i)}$ for every $i\in[q_k]$.

Let $H\in\{P,Q\}$. If $y\in E_{\sigma(i)}$, choose $(x,y)\in H$ and write $x\in E_j$. By this inclusion, $y\in E_{\sigma(j)}$, so disjointness of the partition and injectivity of $\sigma$ give $j=i$. Together with the same inclusion, this proves

\[
H[E_i]=E_{\sigma(i)}.
\]

The defining containment for $\psi$ also gives $H^{-1}[E_{\sigma(i)}]\subseteq E_i$. Conversely, every $x\in E_i$ has a partner under $H$, and every such partner belongs to $E_{\sigma(i)}$ by the inclusion above. Hence $E_i\subseteq H^{-1}[E_{\sigma(i)}]$. Thus, for every $i\in[q_k]$, $P[E_i]=E_{\sigma(i)}=Q[E_i]$ and $P^{-1}[E_{\sigma(i)}]=E_i=Q^{-1}[E_{\sigma(i)}]$.

For distinct $i,j\in[q_k]$, use surjectivity of the first projections of $P$ and $Q$ to choose

\[
(s,t)\in P,\quad s\in E_i,
\qquad
(u,w)\in Q,\quad u\in E_j.
\]

By these image identities and the bounds above,

\[
\lambda\Bigl(
a^{(q_k)}(i,j)-a^{(q_k)}(\sigma(i),\sigma(j))
\Bigr)<\frac{\lambda}{2}.
\]

The expression in parentheses is an integer; hence

\[
a^{(q_k)}(i,j)
\le a^{(q_k)}(\sigma(i),\sigma(j)).
\]

For $i=j$ both sides are $0$, so the inequality holds for all $i,j\in[q_k]$. By \cref{lem:nondecreasing} and the rigidity of $([q_k],a^{(q_k)})$, we obtain $\sigma=\id_{[q_k]}$. Substituting this into the image identities proves the claim.

\end{proof}

\section{Isometric realization of the function space}

\begin{definition}\label{def:Yf}
Set
\[
 \Omega:=S\times\{+,-\},
 \qquad
 \Omega^\varepsilon:=S\times\{\varepsilon\}
 \quad(\varepsilon\in\{+,-\}),
\]
and equip $\Omega$ with the product topology. Define the involution on $\{+,-\}$ by
\[
 \varepsilon^\ast:=
 \begin{cases}
  -,&\varepsilon=+,\\
  +,&\varepsilon=-.
 \end{cases}
\]
For any function $h:K\to\mathbb R$, write $\widehat h:=h\circ\pi$. For $f\in\cL_D(K)$, define $\rho_f$ on $\Omega$ by
\begin{equation}\label{eq:rhof}
 \rho_f\bigl((s,\varepsilon),(t,\varepsilon')\bigr)
 :=
 \begin{cases}
  2b(s,t),&\varepsilon=\varepsilon',\\
  50D+b(s,t)+\widehat f(s)+\widehat f(t),&\varepsilon\ne\varepsilon',
 \end{cases}
\end{equation}
for $s,t\in S$ and $\varepsilon,\varepsilon'\in\{+,-\}$. For a nonempty subset $E\subseteq S$, write
\[
 Y_{f,E}
 :=\bigl(E\times\{+,-\},
 \rho_f|_{(E\times\{+,-\})^2}\bigr).
\]
In particular, let $Y_f:=Y_{f,S}=(\Omega,\rho_f)$, and call $\Omega^+$ and $\Omega^-$ the two layers of $Y_f$.
\end{definition}

\begin{lemma}\label{lem:Ymetric}
For every $f\in\cL_D(K)$, the function $\rho_f$ is a metric on $\Omega$, and the topology it induces agrees with the product topology from \cref{def:Yf}. In particular, $Y_f$ is a compact metric space homeomorphic to the Cantor space. Moreover,
\begin{align}
 \diam_{\rho_f}(\Omega^\varepsilon)&\le48D
 &&(\varepsilon\in\{+,-\}),\label{eq:layer-diam}\\
 \dist_{\rho_f}(\Omega^+,\Omega^-)&\ge50D,\label{eq:layer-gap}\\
 \diam_{\rho_f}(\Omega)&\le76D.\label{eq:Ydiam}
\end{align}
\end{lemma}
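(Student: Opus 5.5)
We verify the metric axioms directly from \eqref{eq:rhof}, using \cref{prop:compactindex} for the properties of $b$ and the bounds $0\le\widehat f\le D$ and $|\widehat f(s)-\widehat f(t)|\le d_K(\pi(s),\pi(t))\le\frac1{16}b(s,t)$; the latter follows from the $1$-Lipschitz property of $f$ together with \eqref{eq:pilip}.

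Symmetry is immediate. Definiteness also follows quickly: on a single layer $\rho_f=2b$, and across layers $\rho_f\ge50D>0$.

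For the triangle inequality $\rho_f(x,z)\le\rho_f(x,y)+\rho_f(y,z)$, we split into cases according to the layers of the three points $x=(s,\varepsilon)$, $y=(t,\varepsilon')$, $z=(u,\varepsilon'')$.
\begin{enumerate}
\item If all three points lie in one layer, the inequality is twice the triangle inequality for $b$.
\item If $x,z$ lie in the same layer and $y$ lies in the other, the right side is at least $100D$. The left side is $2b(s,u)\le48D$ by \eqref{eq:bdiam}.
\item If $x,y$ lie in the same layer and $z$ lies in the other, we need
\[
 50D+b(s,u)+\widehat f(s)+\widehat f(u)\le 2b(s,t)+50D+b(t,u)+\widehat f(t)+\widehat f(u).
\]
This follows from $b(s,u)\le b(s,t)+b(t,u)$ and $\widehat f(s)-\widehat f(t)\le\frac1{16}b(s,t)\le b(s,t)$.
\item The case in which $y,z$ share a layer and $x$ lies in the other is symmetric to the previous one.
\end{enumerate}
The cross-layer case with a shared endpoint layer (cases 3 and 4) is the only place where the Lipschitz bound enters, and this is the only delicate point.

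For the bounds, \eqref{eq:layer-diam} follows from \eqref{eq:bdiam}, since $2b\le48D$ on a layer. \eqref{eq:layer-gap} follows because every cross distance is at least $50D$. For \eqref{eq:Ydiam}, a cross distance is at most $50D+24D+D+D=76D$, and same-layer distances are at most $48D$.

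For the topology, each layer is clopen in both topologies: it is clopen in the $\rho_f$-topology by the gap \eqref{eq:layer-gap}, and clopen in the product topology by definition. On each layer $\rho_f=2b$, which induces the product topology on $S$ by \cref{prop:compactindex}. Hence the $\rho_f$-topology is the disjoint union of two copies of the topology of $S$, i.e.\ the product topology on $\Omega$. Finally, $\Omega$ is compact, metrizable, zero-dimensional and perfect, so it is a Cantor space, either by Brouwer's theorem as in \cref{prop:S-cantor} or because $S\sqcup S$ is homeomorphic to $S$.
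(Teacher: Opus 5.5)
Your proof is correct and follows essentially the same route as the paper. The only cosmetic difference is that the paper handles your cases 3 and 4 at once through the single estimate $|\rho_f((s,\varepsilon),(u,\varepsilon^\ast))-\rho_f((t,\varepsilon),(u,\varepsilon^\ast))|\le\frac{17}{16}b(s,t)\le 2b(s,t)$; the $48D<100D$ argument, the diameter and gap bounds, the layer-by-layer comparison of topologies, and the appeal to Brouwer's characterization are all the same.
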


\begin{proof}
By \eqref{eq:pilip} and the $1$-Lipschitz property of $f$, for every $s,t\in S$,
\[
 |\widehat f(s)-\widehat f(t)|
 \le d_K(\pi(s),\pi(t))
 \le\frac1{16}b(s,t).
\]
Hence, for $s,t,u\in S$ and $\varepsilon\in\{+,-\}$,
\begin{align*}
 &\left|
 \rho_f((s,\varepsilon),(u,\varepsilon^\ast))
 -\rho_f((t,\varepsilon),(u,\varepsilon^\ast))
 \right|\\
 &\qquad\le |b(s,u)-b(t,u)|
        +|\widehat f(s)-\widehat f(t)|\\
 &\qquad\le\frac{17}{16}b(s,t)
 \le2b(s,t)
 =\rho_f((s,\varepsilon),(t,\varepsilon)).
\end{align*}
This estimate simultaneously gives the two triangle inequalities
\begin{align*}
 \rho_f((s,\varepsilon),(u,\varepsilon^\ast))
 &\le \rho_f((s,\varepsilon),(t,\varepsilon))
      +\rho_f((t,\varepsilon),(u,\varepsilon^\ast)),\\
 \rho_f((t,\varepsilon),(u,\varepsilon^\ast))
 &\le \rho_f((t,\varepsilon),(s,\varepsilon))
      +\rho_f((s,\varepsilon),(u,\varepsilon^\ast)).
\end{align*}
For the remaining direction,
\[
 \rho_f((s,\varepsilon),(t,\varepsilon))
 \le48D<100D
 \le\rho_f((s,\varepsilon),(u,\varepsilon^\ast))
   +\rho_f((u,\varepsilon^\ast),(t,\varepsilon)).
\]
The triangle inequality within a single layer follows from the triangle inequality for $b$. Moreover, \eqref{eq:rhof} is symmetric, distinct points in the same layer have positive distance by definiteness of $b$, and points in different layers are at distance at least $50D$. Since $D>0$, distinct points always have positive distance. Thus $\rho_f$ is a metric.

The restriction to either layer is isometric to $(S,2b)$, and \eqref{eq:layer-gap} shows that the two layers are separated by a positive distance. Therefore the topology induced by $\rho_f$ agrees with the product topology, and $Y_f$ is compact. Since $\Omega$ is nonempty, compact, metrizable, zero-dimensional, and has no isolated points, it is a Cantor space by the same characterization used in \cref{prop:S-cantor}.

Equation \eqref{eq:layer-diam} follows from \eqref{eq:bdiam}, and \eqref{eq:layer-gap} follows directly from \eqref{eq:rhof}. For two points in different layers, $\rho_f((s,+),(t,-))\le50D+24D+2D=76D$. Together with \eqref{eq:layer-diam}, this yields \eqref{eq:Ydiam}.
\end{proof}

\begin{lemma}\label{lem:upper}
Let $f,g\in\cL_D(K)$ and put $\delta:=\lVert f-g\rVert_\infty$. Then
\begin{equation}\label{eq:upper}
 \sup_{\omega,\omega'\in\Omega}
 |\rho_f(\omega,\omega')-\rho_g(\omega,\omega')|=2\delta,
 \qquad
 \dGH(Y_f,Y_g)\le\delta.
\end{equation}
\end{lemma}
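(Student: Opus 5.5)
The plan is to compute the pointwise difference $\rho_f-\rho_g$ explicitly from \eqref{eq:rhof}, read off its supremum, and then feed the identity correspondence into \cref{prop:ghcorr}.

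First consider two points in the same layer, $\omega=(s,\varepsilon)$ and $\omega'=(t,\varepsilon)$. Both metrics equal $2b(s,t)$ there, so the difference vanishes.

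Next take two points in different layers, $\omega=(s,\varepsilon)$ and $\omega'=(t,\varepsilon^\ast)$. The terms $50D+b(s,t)$ cancel, which gives
\[
 \rho_f(\omega,\omega')-\rho_g(\omega,\omega')
 =(\widehat f-\widehat g)(s)+(\widehat f-\widehat g)(t).
\]
Its absolute value is at most $2\sup_S|\widehat f-\widehat g|$. Since $\pi$ is surjective by \cref{prop:pi}, this supremum equals $\lVert f-g\rVert_\infty=\delta$, so the overall supremum is at most $2\delta$.

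For the reverse inequality, choose $x\in K$ with $|f(x)-g(x)|$ arbitrarily close to $\delta$; compactness of $K$ and continuity of $f-g$ even let us attain $\delta$ exactly. Pick $s\in\pi^{-1}(x)$ and set $t=s$. Then $\omega=(s,+)$ and $\omega'=(s,-)$ give difference $2(f-g)(x)$, whose absolute value is $2\delta$. This proves the equality for the supremum.

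For the Gromov--Hausdorff bound, regard the identity relation $I_\Omega$ as a correspondence between $Y_f$ and $Y_g$. Its distortion is exactly the supremum just computed, namely $2\delta$. \cref{prop:ghcorr} then gives $2\dGH(Y_f,Y_g)\le 2\delta$, that is, $\dGH(Y_f,Y_g)\le\delta$.

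No step here is a genuine obstacle. The only points needing care are the surjectivity of $\pi$, which identifies the supremum over $S$ with the supremum over $K$, and the attainment of $\delta$.
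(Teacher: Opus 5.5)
Your proof is correct and is essentially the paper's argument. Same-layer distances cancel, and the cross-layer difference is $\widehat h(s)+\widehat h(t)$ with $h=f-g$. The bound $2\delta$ is attained at $(\eta,+),(\eta,-)$ for any $\eta$ with $\pi(\eta)$ a maximizer of $|h|$, and the identity correspondence $I_\Omega$ together with \cref{prop:ghcorr} then gives $\dGH(Y_f,Y_g)\le\delta$.
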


\begin{proof}
Let $h:=f-g$. Distances within a single layer do not depend on $f$, whereas for a pair of points in different layers,
\begin{align*}
 &\rho_f((s,+),(t,-))-\rho_g((s,+),(t,-))\\
 &\qquad=\widehat h(s)+\widehat h(t)
 =h(\pi(s))+h(\pi(t)).
\end{align*}
Therefore
\[
 \sup_{\omega,\omega'\in\Omega}
 |\rho_f(\omega,\omega')-\rho_g(\omega,\omega')|
 \le2\lVert h\rVert_\infty=2\delta.
\]
Since the continuous function $|h|$ attains its maximum on the compact space $K$, there exists $z\in K$ with $|h(z)|=\delta$. Choose $\eta\in S$ such that $\pi(\eta)=z$. Then $|\rho_f((\eta,+),(\eta,-))-\rho_g((\eta,+),(\eta,-))|=2|h(z)|=2\delta$, so the upper bound $2\delta$ is attained. Finally, applying \eqref{eq:GHformula} to the identity correspondence $I_\Omega$ gives $2\dGH(Y_f,Y_g)\le\dis I_\Omega=2\delta$.
\end{proof}

\begin{lemma}\label{lem:layers}
Let $f,g\in\cL_D(K)$ and $\varnothing\ne E\subseteq S$, and let $R$ be a correspondence between $Y_{f,E}$ and $Y_{g,E}$ satisfying $\dis R<2D$.
Put $E^\varepsilon:=E\times\{\varepsilon\}$ and define the layer-swap map
\[
 \tau_E:E\times\{+,-\}\longrightarrow E\times\{+,-\},
 \qquad
 \tau_E(t,\varepsilon):=(t,\varepsilon^\ast),
\]
with $\tau_E^0:=\id_{E\times\{+,-\}}$ and $\tau_E^1:=\tau_E$. Then there exists $\nu\in\{0,1\}$ such that
\begin{equation}\label{eq:normalized-relation}
 \widetilde R
 :=\{(x,\tau_E^\nu(y))\mid (x,y)\in R\}
\end{equation}
is a correspondence satisfying
\begin{equation}\label{eq:layer-preservation}
 \dis\widetilde R=\dis R,
 \qquad
 \widetilde R\subseteq
 (E^+\times E^+)\cup(E^-\times E^-).
\end{equation}
Furthermore,
\[
 \widetilde R_\varepsilon
 :=\{(s,t)\in E^2\mid
 ((s,\varepsilon),(t,\varepsilon))\in\widetilde R\}
 \in\Corr(E,E)
 \qquad(\varepsilon\in\{+,-\}).
\]
\end{lemma}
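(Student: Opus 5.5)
The plan is to use the large gap between intra-layer distances (at most $48D$, by \eqref{eq:layer-diam}) and inter-layer distances (at least $50D$, by \eqref{eq:layer-gap}). This gap, together with $\dis R<2D$, forces $R$ either to preserve layers or to swap them globally.

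First, the distances in $Y_{f,E}$ and $Y_{g,E}$ are restrictions of $\rho_f,\rho_g$, so the same two bounds hold there. Fix $(x,y),(x',y')\in R$. If $x,x'$ lie in the same layer, then $\rho_f(x,x')\le48D$. Hence $\rho_g(y,y')<48D+2D=50D$, so $y,y'$ lie in the same layer. Conversely, if $x,x'$ lie in different layers, then $\rho_f(x,x')\ge50D$. Hence $\rho_g(y,y')>48D$, so $y,y'$ cannot share a layer. Thus $x,x'$ are in the same layer exactly when $y,y'$ are.

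Now pick any $(x_0,y_0)\in R$, which is possible since $E\ne\varnothing$ and $R$ is a correspondence. Let $\nu=0$ if $x_0$ and $y_0$ lie in the same layer and $\nu=1$ otherwise. For any $(x,y)\in R$, compare it with $(x_0,y_0)$ using the previous step. If $x$ shares the layer of $x_0$, then $y$ shares the layer of $y_0$; if not, neither does $y$. In both cases the relative layer of $x$ and $\tau_E^\nu(y)$ is the same as that of $x_0$ and $\tau_E^\nu(y_0)$, namely equal. This proves $\widetilde R\subseteq(E^+\times E^+)\cup(E^-\times E^-)$.

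The map $\tau_E^\nu$ is a bijection of $E\times\{+,-\}$, so $\widetilde R$ is again a correspondence. To get $\dis\widetilde R=\dis R$, the key check is that $\tau_E$ is an isometry of $Y_{g,E}$. This follows from \eqref{eq:rhof}: same-layer distances $2b(s,t)$ are unchanged, and the cross-layer formula $50D+b(s,t)+\widehat g(s)+\widehat g(t)$ is symmetric under swapping the layer labels. Hence the distortions of $R$ and $\widetilde R$ coincide term by term.

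Finally, fix $\varepsilon$ and $s\in E$. Since $(s,\varepsilon)$ has some partner in $\widetilde R$, and layers are preserved, that partner has the form $(t,\varepsilon)$; this gives surjectivity of the first projection of $\widetilde R_\varepsilon$. The symmetric argument gives the second projection, so $\widetilde R_\varepsilon\in\Corr(E,E)$. The only genuinely delicate point is the layer dichotomy in the first step, which hinges on the strict inequality $48D+2D\le 50D$ in the right direction. Everything after that is bookkeeping.
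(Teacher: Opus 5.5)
Your proof is correct and follows the paper's argument. The $48D$ versus $50D$ gap forces $R$ to either preserve the layers or swap them globally, composing with the layer-swap isometry $\tau_E$ of $Y_{g,E}$ normalizes $R$, and layer preservation plus surjectivity gives $\widetilde R_\varepsilon\in\Corr(E,E)$. The only difference is minor: you get the preserve-or-swap dichotomy from the reverse distortion bound and a reference pair $(x_0,y_0)$, whereas the paper gets bijectivity of the induced sign map $\sigma$ from surjectivity of the projections of $R$.
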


\begin{proof}
Suppose that two points in the same layer of $Y_{f,E}$ are related to points in different layers of $Y_{g,E}$. Thus, for some $\varepsilon,\vartheta,\vartheta'\in\{+,-\}$ with $\vartheta\ne\vartheta'$ and $s,s',t,t'\in E$, we have $((s,\varepsilon),(t,\vartheta))\in R$ and $((s',\varepsilon),(t',\vartheta'))\in R$.
By \eqref{eq:layer-diam} and \eqref{eq:layer-gap}, $\rho_f((s,\varepsilon),(s',\varepsilon))\le48D$ and $\rho_g((t,\vartheta),(t',\vartheta'))\ge50D$. Hence $\dis R\ge50D-48D=2D$, contrary to the hypothesis. Therefore, for each $\varepsilon\in\{+,-\}$, there is a unique sign $\sigma(\varepsilon)\in\{+,-\}$ such that $R[E^\varepsilon]\subseteq E^{\sigma(\varepsilon)}$.
Since both projections of $R$ are surjective, the map $\sigma:\{+,-\}\longrightarrow\{+,-\}$ is bijective. Thus $\sigma$ is either the identity or the sign swap.

The map $\tau_E$ is a self-isometry of $Y_{g,E}$. Choose $\nu=0$ if $\sigma$ is the identity and $\nu=1$ if $\sigma$ is the sign swap. Then the relation $\widetilde R$ defined by \eqref{eq:normalized-relation} is a correspondence, has the same distortion as $R$, and satisfies \eqref{eq:layer-preservation}. The inclusion in \eqref{eq:layer-preservation}, together with surjectivity of both projections of $\widetilde R$, implies that both projections of each $\widetilde R_\varepsilon$ are surjective.
\end{proof}

\begin{proof}[Proof of \cref{thm:lip}]
By \cref{lem:Ymetric,lem:upper}, it remains to prove the Gromov--Hausdorff lower bound in \eqref{eq:mainlip}.

Let $f,g\in\cL_D(K)$ and put $\delta:=\lVert f-g\rVert_\infty$. If $\delta=0$, the conclusion follows from \cref{lem:upper}; hence assume $\delta>0$. Since the continuous function $|f-g|$ attains its maximum on compact $K$, there is $z_0\in K$ such that $|f(z_0)-g(z_0)|=\delta$. Interchanging $f$ and $g$ if necessary, assume $f(z_0)-g(z_0)=\delta$. Fix $\eta\in S$ satisfying $\pi(\eta)=z_0$.

Suppose, toward a contradiction, that there exists a correspondence $R\in\Corr(\Omega,\Omega)$ between $Y_f$ and $Y_g$ with $\Delta:=\dis R<2\delta$.
Since $0<\delta\le D$, we have $\Delta<2D$. Apply \cref{lem:layers} with $E=S$ and replace $R$ by the normalized relation $\widetilde R$ from \eqref{eq:normalized-relation}. This replacement leaves the distortion $\Delta$ unchanged. From now on, write $R$ again for the normalized correspondence and set
\[
 R_\varepsilon
 :=\{(s,t)\in S^2\mid
 ((s,\varepsilon),(t,\varepsilon))\in R\}
 \in\Corr(S,S)
 \qquad(\varepsilon\in\{+,-\}).
\]

For each $k\ge0$, put $C_k:=C_{\eta|_k}$.
We prove by induction that
\begin{equation}\label{eq:induction}
 R_\varepsilon[C_k]=C_k=R_\varepsilon^{-1}[C_k]
 \qquad(\varepsilon\in\{+,-\}).
\end{equation}
Since $C_0=S$, the case $k=0$ follows from $R_\varepsilon\in\Corr(S,S)$.

Assume that \eqref{eq:induction} holds at depth $k$. Then
\[
 P_k:=R_+\cap(C_k\times C_k),
 \qquad
 Q_k:=R_-\cap(C_k\times C_k)
\]
both belong to $\Corr(C_k,C_k)$. Take arbitrary $(s,t)\in P_k$ and $(u,w)\in Q_k$. The points $\pi(s),\pi(t),\pi(u),\pi(w),z_0$ all lie in $K_{\eta|_k}$, whose diameter is at most $\gamma_k$. Since $f(z_0)-g(z_0)=\delta$ and $f,g$ are $1$-Lipschitz,
\begin{align*}
 \widehat f(s)-\widehat g(t)
 &=f(\pi(s))-f(z_0)+\delta+g(z_0)-g(\pi(t))
 \ge\delta-2\gamma_k,\\
 \widehat f(u)-\widehat g(w)
 &=f(\pi(u))-f(z_0)+\delta+g(z_0)-g(\pi(w))
 \ge\delta-2\gamma_k.
\end{align*}
On the other hand, applying the definition of distortion to the two elements
$((s,+),(t,+))$ and $((u,-),(w,-))$ of $R$, and cancelling the common term $50D$ in the cross-layer distances, gives
\[
 \left|
 b(s,u)-b(t,w)
 +\widehat f(s)+\widehat f(u)
 -\widehat g(t)-\widehat g(w)
 \right|
 \le\Delta.
\]
Combining the upper bound $\Delta$ with the two Lipschitz estimates gives a bound uniform over $P_k\times Q_k$:
\[
 \sup_{\substack{(s,t)\in P_k\\(u,w)\in Q_k}}
 \bigl(b(s,u)-b(t,w)\bigr)
 \le\Delta-2\delta+4\gamma_k
 <4\gamma_k=\frac{\lambda_k}{2}.
\]
Moreover, $C_k=C_{\eta|_k}$ meets every child cylinder, so \cref{lem:cylinder-rigidity} applies to $P_k,Q_k$ with $E=C_k$ and $v=\eta|_k$.
Since $C_{k+1}=C_{\eta|_{k+1}}=C_{(\eta|_k)\,\eta_k}$, applying the lemma to the child label $i=\eta_k$ yields
\[
 P_k[C_{k+1}]=C_{k+1}=P_k^{-1}[C_{k+1}],\qquad
 Q_k[C_{k+1}]=C_{k+1}=Q_k^{-1}[C_{k+1}].
\]
The induction hypothesis implies
\[
 R_\varepsilon\cap
 \bigl((C_k\times S)\cup(S\times C_k)\bigr)
 \subseteq C_k\times C_k.
\]
Hence the preceding equalities for $P_k,Q_k$ imply
\[
 R_\varepsilon[C_{k+1}]=C_{k+1}
 =R_\varepsilon^{-1}[C_{k+1}]
 \qquad(\varepsilon\in\{+,-\}),
\]
proving \eqref{eq:induction} at depth $k+1$.

Thus \eqref{eq:induction} holds for every $k\ge0$. Fix $\varepsilon\in\{+,-\}$. Since the first projection of $R_\varepsilon$ is surjective, choose $t_\varepsilon\in S$ with $(\eta,t_\varepsilon)\in R_\varepsilon$. For every $k$, the inclusion $\eta\in C_k$ and \eqref{eq:induction} give $t_\varepsilon\in C_k$, and hence
\[
 t_\varepsilon\in\bigcap_{k=0}^{\infty}C_k=\{\eta\}.
\]
Therefore $(\eta,\eta)\in R_+\cap R_-$, so $R$ contains
\[
 ((\eta,+),(\eta,+)),\qquad ((\eta,-),(\eta,-)).
\]
The distance discrepancy determined by these two elements is
\[
 \left|
 \rho_f((\eta,+),(\eta,-))
 -\rho_g((\eta,+),(\eta,-))
 \right|
 =2|f(z_0)-g(z_0)|
 =2\delta,
\]
contradicting the assumption $\Delta<2\delta$.

Hence every correspondence $R\in\Corr(\Omega,\Omega)$ between $Y_f$ and $Y_g$ satisfies $\dis R\ge2\delta$. By \cref{prop:ghcorr} and \cref{lem:upper}, $\dGH(Y_f,Y_g)=\delta=\lVert f-g\rVert_\infty$. The assertion concerning the identity correspondence also follows from \eqref{eq:upper}.
\end{proof}

\begin{proof}[Proof of \cref{thm:main}]
If $K$ has one point, map it to the isometry class of the one-point metric space. Otherwise, set $D:=\diam K>0$ and, for each $x\in K$, define $f_x(z):=d_K(x,z)$ for $z\in K$. Then $f_x\in\cL_D(K)$. For every $z\in K$, $|f_x(z)-f_y(z)|=|d_K(x,z)-d_K(y,z)|\le d_K(x,y)$, whereas taking $z=y$ gives $|f_x(y)-f_y(y)|=d_K(x,y)$. Hence $\lVert f_x-f_y\rVert_\infty=d_K(x,y)$. Therefore the construction in \cref{thm:lip} shows that $x\longmapsto[(\Omega,\rho_{f_x})]$ is the required isometric embedding, with $\rho_x:=\rho_{f_x}$.
\end{proof}

\section{Finite approximations and consequences}

\begin{definition}\label{def:finite-model}
For $m\ge0$ and $v\in T_m$, define $s_v\in S$ by
\[
 (s_v)_j:=
 \begin{cases}
  v_j,&0\le j<m,\\
  1,&j\ge m.
 \end{cases}
\]
Set $S_m:=\{s_v\mid v\in T_m\}$ and $Z_m:=\pi(S_m)$, and define $Y_f^{(m)}:=Y_{f,S_m}$.
\end{definition}

\begin{lemma}\label{lem:finite-net}
For every $m\ge0$, one has $S_m\subseteq S_{m+1}$, and $Z_m$ is a $\gamma_m$-net of $K$.
\end{lemma}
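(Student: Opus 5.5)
The statement has two parts, and we prove each directly from \cref{def:finite-model} together with the properties of the covering tree and of $\pi$ from \cref{prop:pi}.

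For the inclusion $S_m\subseteq S_{m+1}$, fix $v\in T_m$ and put $v1:=(v_0,\dots,v_{m-1},1)\in T_{m+1}$. This is legitimate because $q_m\ge7$, so $1\in[q_m]$. We then compare coordinates. For $j<m$ we have $(s_{v1})_j=v_j=(s_v)_j$. For $j=m$ we have $(s_{v1})_m=1=(s_v)_m$. For $j\ge m+1$ both coordinates equal $1$. Hence $s_v=s_{v1}\in S_{m+1}$. This part is purely combinatorial.

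For the net property, take an arbitrary $z\in K$. As in the surjectivity part of the proof of \cref{prop:pi}, the covering identities $K_\varnothing=K$ and $K_u=\bigcup_i K_{ui}$ let us choose inductively $v\in T_m$ with $z\in K_v$. Next we locate $\pi(s_v)$. Since $s_v|_m=v$, the definition \eqref{eq:pi} shows that $\pi(s_v)$ lies in $\bigcap_k K_{s_v|_k}$. In particular $\pi(s_v)\in K_{s_v|_m}=K_v$. Both $z$ and $\pi(s_v)$ therefore lie in $K_v$, and the diameter bound of \cref{def:covertree} gives
\[
 d_K\bigl(z,\pi(s_v)\bigr)\le\diam K_v\le\gamma_m .
\]
Since $\pi(s_v)\in Z_m$, we get $d_K(z,Z_m)\le\gamma_m$ for every $z\in K$. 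Moreover $Z_m$ is nonempty because $T_m$ is nonempty. So $Z_m$ is a $\gamma_m$-net of $K$.

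Neither step is a real obstacle. The only point needing care is the case $m=0$. There $T_0=\{\varnothing\}$ and $s_\varnothing=(1,1,\dots)$, and the argument reduces to $z,\pi(s_\varnothing)\in K$ together with $\diam K\le D=\gamma_0$.
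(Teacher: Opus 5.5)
Your proof is correct and follows the paper's argument: the inclusion comes from $s_v=s_{v1}$, and the net property comes from placing $z$ and $\pi(s_v)$ in the same $K_v$ with $v\in T_m$. The only cosmetic difference is that you choose $v$ directly through the inductive covering argument, whereas the paper takes $\alpha$ with $\pi(\alpha)=z$ and sets $v:=\alpha|_m$, which amounts to the same thing.
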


\begin{proof}
For $v\in T_m$, we have $s_v=s_{v1}$, so $S_m\subseteq S_{m+1}$. Given any $z\in K$, choose $\alpha\in S$ such that $\pi(\alpha)=z$, and put $v:=\alpha|_m$. Both $z$ and $\pi(s_v)$ belong to $K_v$, hence $d_K(z,\pi(s_v))\le\diam K_v\le\gamma_m$. Thus $Z_m$ is a $\gamma_m$-net of $K$.
\end{proof}

\begin{proposition}\label{prop:finite}
For every $m\ge1$ and every $f,g\in\cL_D(K)$,
\begin{equation}\label{eq:finiteexact}
 \dGH(Y_f^{(m)},Y_g^{(m)})
 =\max_{z\in Z_m}|f(z)-g(z)|.
\end{equation}
Consequently,
\begin{equation}\label{eq:finiteerror}
 0\le
 \lVert f-g\rVert_\infty-
 \dGH(Y_f^{(m)},Y_g^{(m)})
 \le2\gamma_m.
\end{equation}
\end{proposition}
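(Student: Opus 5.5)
Write $E:=S_m$ and $\delta_m:=\max_{z\in Z_m}|f(z)-g(z)|=\max_{s\in S_m}|\widehat f(s)-\widehat g(s)|$. The plan is to rerun the proof of \cref{thm:lip} with $E=S_m$ in place of $S$.

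\emph{Upper bound.} Use the identity correspondence on $S_m\times\{+,-\}$. Within a layer the distances agree. Across layers the discrepancy is $|\widehat h(s)+\widehat h(t)|\le2\delta_m$ with $h:=f-g$, so \cref{prop:ghcorr} gives $\dGH(Y_f^{(m)},Y_g^{(m)})\le\delta_m$.

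\emph{Lower bound.} Assume $\delta_m>0$ and choose $\eta\in S_m$ with $|\widehat f(\eta)-\widehat g(\eta)|=\delta_m$; after swapping $f,g$ we may take $\widehat f(\eta)-\widehat g(\eta)=\delta_m$. Suppose a correspondence $R$ has $\Delta:=\dis R<2\delta_m\le2D$. Normalize it by \cref{lem:layers} with $E=S_m$ to get $R_\pm\in\Corr(S_m,S_m)$. For $k<m$, set $E_k:=S_m\cap C_{\eta|_k}$. The key point is that, for $k<m$, $E_k$ meets every child cylinder $C_{(\eta|_k)i}$: it contains $s_{(\eta|_k)i1\cdots1}$, since $\eta|_k$ extends to length $m$. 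So \cref{lem:cylinder-rigidity} applies to $E=E_k$ exactly as in the proof of \cref{thm:lip}. That argument used $z_0=\pi(\eta)$ and the diameter bound of $K_{\eta|_k}$, and both still hold. Inducting for $k=0,\dots,m-1$ gives $R_\varepsilon[E_m]=E_m=R_\varepsilon^{-1}[E_m]$ with $E_m=S_m\cap C_{\eta|_m}$. Since the points $s_v$ are determined by their first $m$ coordinates, $E_m=\{\eta\}$. Hence $(\eta,\eta)\in R_+\cap R_-$, and the pair $(\eta,\pm)$ yields discrepancy $2\delta_m>\Delta$, a contradiction. This gives \eqref{eq:finiteexact}.

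\emph{The estimate \eqref{eq:finiteerror}.} The lower inequality holds because $Z_m\subseteq K$. For the upper one, pick $z$ with $|f(z)-g(z)|=\lVert f-g\rVert_\infty$ and, by \cref{lem:finite-net}, a point $z'\in Z_m$ with $d_K(z,z')\le\gamma_m$. Since $f-g$ is $2$-Lipschitz, $|f(z')-g(z')|\ge\lVert f-g\rVert_\infty-2\gamma_m$.

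The step needing the most care is the rigidity induction. We must check that the one-sided bound $\Delta-2\delta_m+4\gamma_k<\lambda_k/2$ still holds when $\delta_m$ replaces $\delta$. It does, because only $\widehat f(\eta)-\widehat g(\eta)=\delta_m$ and the bound $\Delta<2\delta_m$ were used. We must also check that the induction stops at depth $m$, where the children-meeting hypothesis would fail. Since the induction only needs $k\le m-1$, this is exactly what $m\ge1$ allows.
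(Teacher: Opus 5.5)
Your proposal is correct and follows essentially the same route as the paper. The identity correspondence gives the upper bound; the lower bound reruns the rigidity induction of \cref{thm:lip} on $E_k=C_{\eta|_k}\cap S_m$ for $k<m$, with the points $s_{(\eta|_k)i1\cdots1}$ ensuring that $E_k$ meets every child cylinder, and concludes from $E_m=\{\eta\}$; then the same $2$-Lipschitz net argument gives \eqref{eq:finiteerror} (the unstated case $\delta_m=0$ is immediate from your upper bound).
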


\begin{proof}
Set $\delta_m:=\max_{z\in Z_m}|f(z)-g(z)|$.
If $I_m:=\{(x,x)\mid x\in S_m\times\{+,-\}\}$, then $\dis I_m=2\delta_m$ and $\dGH(Y_f^{(m)},Y_g^{(m)})\le\delta_m$.
If $\delta_m=0$, then $\widehat f=\widehat g$ on $S_m$, and therefore $\rho_f|_{(S_m\times\{+,-\})^2}=\rho_g|_{(S_m\times\{+,-\})^2}$. Hence \eqref{eq:finiteexact} follows in this case.

Assume now that $\delta_m>0$. By the definitions of $Z_m=\pi(S_m)$ and $\delta_m$, there exists $\eta\in S_m$ such that $|f(\pi(\eta))-g(\pi(\eta))|=\delta_m$. Interchanging $f$ and $g$ if necessary, assume $f(\pi(\eta))-g(\pi(\eta))=\delta_m$.
Suppose that there exists a correspondence $R\in\Corr(S_m\times\{+,-\},S_m\times\{+,-\})$ between $Y_f^{(m)}$ and $Y_g^{(m)}$ with $\Delta:=\dis R<2\delta_m$.
Since $\delta_m\le D$, we have $\Delta<2D$. Apply \cref{lem:layers} with $E=S_m$ and replace $R$ by the normalized relation $\widetilde R$ from \eqref{eq:normalized-relation}. This replacement leaves the distortion unchanged. From now on, write $R$ again for the normalized correspondence and put
\[
 R_\varepsilon
 :=\{(s,t)\in S_m^2\mid
 ((s,\varepsilon),(t,\varepsilon))\in R\}
 \in\Corr(S_m,S_m)
 \qquad(\varepsilon\in\{+,-\}).
\]

For $0\le k\le m$, set $E_k:=C_{\eta|_k}\cap S_m$.
We prove by induction on $k$ that
\begin{equation}\label{eq:finite-induction}
 R_\varepsilon[E_k]=E_k=R_\varepsilon^{-1}[E_k]
 \qquad(\varepsilon\in\{+,-\}).
\end{equation}
Since $E_0=S_m$, the assertion is true for $k=0$. Assume that \eqref{eq:finite-induction} holds for some $k<m$. Then
\[
 P_k:=R_+\cap(E_k\times E_k),
 \qquad
 Q_k:=R_-\cap(E_k\times E_k)
\]
belong to $\Corr(E_k,E_k)$. Take arbitrary $(s,t)\in P_k$ and $(u,w)\in Q_k$. The points $\pi(s),\pi(t),\pi(u),\pi(w)$ and $\pi(\eta)$ all belong to $K_{\eta|_k}$. Hence, using
$f(\pi(\eta))-g(\pi(\eta))=\delta_m$ and the fact that $f,g$ are $1$-Lipschitz,
\begin{align*}
 \widehat f(s)-\widehat g(t)
 &=f(\pi(s))-f(\pi(\eta))+\delta_m
   +g(\pi(\eta))-g(\pi(t))
 \ge\delta_m-2\gamma_k,\\
 \widehat f(u)-\widehat g(w)
 &=f(\pi(u))-f(\pi(\eta))+\delta_m
   +g(\pi(\eta))-g(\pi(w))
 \ge\delta_m-2\gamma_k.
\end{align*}
Applying the definition of distortion to the two elements
$((s,+),(t,+))$ and $((u,-),(w,-))$ of $R$, and cancelling the common term $50D$ in the cross-layer distances, gives
\[
 \left|
 b(s,u)-b(t,w)
 +\widehat f(s)+\widehat f(u)
 -\widehat g(t)-\widehat g(w)
 \right|
 \le\Delta.
\]
The preceding Lipschitz estimates therefore imply the uniform bound
\[
 \sup_{\substack{(s,t)\in P_k\\(u,w)\in Q_k}}
 \bigl(b(s,u)-b(t,w)\bigr)
 \le\Delta-2\delta_m+4\gamma_k
 <4\gamma_k=\frac{\lambda_k}{2}.
\]
For $k<m$ and $i\in[q_k]$, define $v^{(i)}\in T_m$ by
\[
 v^{(i)}_j:=
 \begin{cases}
  \eta_j,&0\le j<k,\\
  i,&j=k,\\
  1,&k<j<m
 \end{cases}
 \qquad(0\le j<m).
\]
Then
\[
 s_{v^{(i)}}\in E_k\cap C_{(\eta|_k)\,i}.
\]
Thus $E_k$ meets every child cylinder of $C_{\eta|_k}$, and \cref{lem:cylinder-rigidity} applies with $E=E_k$ and $v=\eta|_k$.
The part corresponding to the child label $i=\eta_k$ is $E_k\cap C_{(\eta|_k)\,\eta_k}=E_{k+1}$.
Hence the lemma yields
\[
 P_k[E_{k+1}]=E_{k+1}=P_k^{-1}[E_{k+1}],\qquad
 Q_k[E_{k+1}]=E_{k+1}=Q_k^{-1}[E_{k+1}].
\]
The induction hypothesis at depth $k$ implies
\[
 R_\varepsilon\cap
 \bigl((E_k\times S_m)\cup(S_m\times E_k)\bigr)
 \subseteq E_k\times E_k.
\]
Therefore the preceding equalities for $P_k,Q_k$ give
\[
 R_\varepsilon[E_{k+1}]=E_{k+1}
 =R_\varepsilon^{-1}[E_{k+1}]
 \qquad(\varepsilon\in\{+,-\}),
\]
which proves \eqref{eq:finite-induction} at depth $k+1$.

The map $v\mapsto s_v$ is injective on $T_m$, since the first $m$ coordinates of $s_v$ are exactly $v$. Moreover, $\eta=s_{\eta|_m}$, so at depth $m$, $E_m=C_{\eta|_m}\cap S_m=\{\eta\}$.
Thus $(\eta,\eta)\in R_+\cap R_-$, and the normalized correspondence $R$ contains
\[
 ((\eta,+),(\eta,+)),
 \qquad
 ((\eta,-),(\eta,-)).
\]
The distance discrepancy determined by these two elements is
\[
 \left|
 \rho_f((\eta,+),(\eta,-))
 -\rho_g((\eta,+),(\eta,-))
 \right|
 =2|f(\pi(\eta))-g(\pi(\eta))|
 =2\delta_m,
\]
contradicting the assumption $\Delta<2\delta_m$. Therefore $\dis R\ge2\delta_m$ for every correspondence $R$, and \eqref{eq:GHformula} gives \eqref{eq:finiteexact}.

Finally, let $h:=f-g$. Then $h$ is $2$-Lipschitz. Choose $z\in K$ at which $|h|$ attains its maximum, and use \cref{lem:finite-net} to choose $z_m\in Z_m$ with $d_K(z,z_m)\le\gamma_m$. Then
\[
 \lVert h\rVert_\infty
 =|h(z)|
 \le|h(z_m)|+2d_K(z,z_m)
 \le\delta_m+2\gamma_m.
\]
Together with \eqref{eq:finiteexact}, this proves \eqref{eq:finiteerror}.
\end{proof}

\begin{proposition}\label{prop:uniformcompact}
For every $m\ge1$ and every $f\in\cL_D(K)$,
\begin{equation}\label{eq:uniformfinite}
 \dGH(Y_f^{(m)},Y_f)
 \le6\lambda_m
 =48D\,4^{-m}.
\end{equation}
Moreover, if $\varepsilon\ge6\lambda_m$, then
\begin{equation}\label{eq:covering-number}
 N_{Y_f}(\varepsilon)
 \le2|S_m|
 =2\prod_{j=0}^{m-1}q_j.
\end{equation}
In particular, this covering-number bound is independent of $f\in\cL_D(K)$.
\end{proposition}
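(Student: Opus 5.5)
Since $Y_f^{(m)}$ is the subspace $S_m\times\{+,-\}$ of $Y_f$, we bound the Hausdorff distance inside $Y_f$ itself. By \cref{def:gh}, with $Z=Y_f$ and the inclusion maps, $\dGH(Y_f^{(m)},Y_f)\le d_{\mathrm H}^{Y_f}(S_m\times\{+,-\},\Omega)=\sup_{\omega\in\Omega}\rho_f(\omega,S_m\times\{+,-\})$. Alternatively, one can take the correspondence pairing each $(s,\varepsilon)$ with $(s_{s|_m},\varepsilon)$ and use \cref{prop:ghcorr}. Either route works, but the Hausdorff route is shortest.

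Take $(s,\varepsilon)\in\Omega$ and put $v:=s|_m\in T_m$. Then $s_v\in S_m$ and $s_v\in C_v$, so $s,s_v\in C_v$. By \eqref{eq:cylbounds}, $b(s,s_v)\le\diam_b(C_v)\le3\lambda_m$. Since the two points lie in the same layer, \eqref{eq:rhof} gives
\[
 \rho_f\bigl((s,\varepsilon),(s_v,\varepsilon)\bigr)=2b(s,s_v)\le6\lambda_m.
\]
Every point of $S_m\times\{+,-\}$ is trivially at distance $0$ from $\Omega$. Hence the Hausdorff distance is at most $6\lambda_m=48D\,4^{-m}$, which proves \eqref{eq:uniformfinite}. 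None of this depends on $f$, because same-layer distances equal $2b$ for every $f$.

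For \eqref{eq:covering-number}, the argument above also shows that $A:=S_m\times\{+,-\}$ is a finite $6\lambda_m$-net of $Y_f$. Therefore, for $\varepsilon\ge6\lambda_m$, the definition of $N_{Y_f}$ gives $N_{Y_f}(\varepsilon)\le|A|=2|S_m|$. The map $v\mapsto s_v$ is injective on $T_m$, as observed in the proof of \cref{prop:finite}, so $|S_m|=|T_m|=\prod_{j=0}^{m-1}q_j$. This bound depends only on the fixed covering tree and not on $f$.

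There is no real obstacle here. The only point needing care is that the nearest point $s_v$ can be chosen in the same layer, so that the $f$-dependent cross-layer term in \eqref{eq:rhof} never appears. This is what makes both bounds independent of $f$.
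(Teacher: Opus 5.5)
Your proposal is correct and follows the paper's proof essentially step for step. You project each $(s,\varepsilon)$ to $(s_{s|_m},\varepsilon)$ in the same layer and use \eqref{eq:cylbounds} to see that $S_m\times\{+,-\}$ is a $6\lambda_m$-net of $Y_f$; this bounds the Hausdorff distance under the isometric inclusion, and together with the injectivity of $v\mapsto s_v$ it gives the $f$-independent covering-number bound.
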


\begin{proof}
For any $s\in S$, let $v:=s|_m$. Then $s,s_v\in C_v$, and \eqref{eq:cylbounds} gives
\[
 \rho_f((s,\vartheta),(s_v,\vartheta))
 =2b(s,s_v)
 \le6\lambda_m
 \qquad(\vartheta\in\{+,-\}).
\]
Thus, under the natural inclusion $S_m\times\{+,-\}\subseteq\Omega$, the finite set $S_m\times\{+,-\}$ is a $6\lambda_m$-net in $Y_f$. Since $Y_f^{(m)}$ carries the restricted metric, this inclusion is isometric, and therefore
\[
 \dGH(Y_f^{(m)},Y_f)
 \le d_\mathrm{H}^{Y_f}(S_m\times\{+,-\},\Omega)
 \le6\lambda_m,
\]
which proves \eqref{eq:uniformfinite}. If $\varepsilon\ge6\lambda_m$, then the same finite set is also an $\varepsilon$-net of $Y_f$. Since the map $v\mapsto s_v$ is injective on $T_m$ by definition, $|S_m|=|T_m|=\prod_{j=0}^{m-1}q_j$. Therefore $N_{Y_f}(\varepsilon)\le|S_m\times\{+,-\}|=2|S_m|$, which is \eqref{eq:covering-number}.
\end{proof}

\begin{corollary}\label{cor:totally-bounded}
Every totally bounded metric space admits an isometric embedding into $\M$.
\end{corollary}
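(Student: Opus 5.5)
Let $(X,d)$ be a totally bounded metric space; we may assume $X\neq\varnothing$. The plan is to pass to the metric completion $\widehat X$ and apply the main theorem to it. Since $X$ is totally bounded, so is its completion: every finite $\varepsilon$-net of $X$ is a $2\varepsilon$-net of $\widehat X$ (indeed of the closure of $X$, which is all of $\widehat X$). A complete, totally bounded metric space is compact, so $\widehat X$ is a nonempty compact metric space.

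By \cref{thm:main} applied to $K:=\widehat X$, there is an isometric embedding $\Phi:(\widehat X,\widehat d)\to(\M,\dGH)$. Composing with the canonical isometric embedding $\iota:X\to\widehat X$, the map $\Phi\circ\iota$ is an isometric embedding of $X$ into $\M$. In the degenerate case where $\widehat X$ is a single point, \cref{thm:main} already handles it by sending that point to the class of the one-point space. If more information is wanted, the diameter bound $76\,\diam X$ holds, because $\diam\widehat X=\diam X$.

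No step here is a real obstacle. The only point needing care is the standard fact that the completion of a totally bounded space is compact. I would cite this directly, or prove it in one line by approximating each point of $\widehat X$ by points of $X$ and using the $\varepsilon$-nets of $X$.
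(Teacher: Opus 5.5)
Your proposal is correct and is the same argument as the paper's: pass to the completion, which is compact because $X$ is totally bounded, apply \cref{thm:main} to it, and restrict the resulting isometric embedding to $X$. Your additional remarks are accurate but not needed for the statement; these are the $2\varepsilon$-net argument for total boundedness of the completion and the observation that the diameter bound becomes $76\,\diam X$.
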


\begin{proof}
The completion $\overline X$ of a totally bounded metric space $X$ is compact. Apply \cref{thm:main} to $\overline X$ and restrict the resulting isometric embedding to $X$.
\end{proof}

\begin{corollary}\label{cor:cantor-universal}
Define $\M_{\mathrm{Cantor}}:=\{[X]\in\M\mid \text{$X$ is homeomorphic to the Cantor space}\}$, and equip $\M_{\mathrm{Cantor}}$ with the restriction of $\dGH$. Then every nonempty compact metric space admits an isometric embedding into $\M_{\mathrm{Cantor}}$.
\end{corollary}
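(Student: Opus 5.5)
The embedding of \cref{thm:main} already does the work, because every image point has a Cantor representative. We split according to the cardinality of $K$.

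If $K$ has at least two points, set $D:=\diam K>0$ and take $\Phi(x):=[(\Omega,\rho_x)]$ with $\rho_x:=\rho_{f_x}$, as in \cref{thm:main}. By \cref{lem:Ymetric}, each $\rho_{f}$ induces the product topology on $\Omega=S\times\{+,-\}$, and $(\Omega,\rho_f)$ is homeomorphic to the Cantor space. Hence every $\Phi(x)$ lies in $\M_{\mathrm{Cantor}}$. The restriction of $\dGH$ to $\M_{\mathrm{Cantor}}$ is just the ambient distance, so \cref{thm:main} gives $\dGH(\Phi(x),\Phi(y))=d_K(x,y)$, and $\Phi$ is an isometric embedding into $\M_{\mathrm{Cantor}}$.

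The one-point case needs separate care, and it is the only real obstacle. In the proof of \cref{thm:main}, $\Phi$ sends the point to the class of the one-point space, which is not a Cantor space. To fix this, choose any $D>0$, say $D=1$, and note that the hypothesis $\diam K\le D$ of \cref{thm:lip} holds. The constant function $f\equiv0$ belongs to $\cL_D(K)$, so we may map the single point to $[(\Omega,\rho_0)]$. That space is Cantor by \cref{lem:Ymetric}, and there is only one pair of points to check, so the map is trivially isometric.

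Alternatively, the whole statement can be run through \cref{thm:lip} uniformly. For any compact $K$, pick $D\ge\diam K$ with $D>0$ and use $f_x=d_K(x,\cdot)\in\cL_D(K)$. We have $\lVert f_x-f_y\rVert_\infty=d_K(x,y)$, and every resulting space is Cantor by \cref{lem:Ymetric}. This handles both cases at once.
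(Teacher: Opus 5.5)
Your proof is correct and follows the paper's argument: for $\diam K>0$ the images $[(\Omega,\rho_x)]$ are Cantor by \cref{lem:Ymetric}, and for a one-point $K$ the paper maps to an arbitrary point of $\M_{\mathrm{Cantor}}$, which your choice $[(\Omega,\rho_0)]$ makes explicit. Your uniform variant is also valid. It applies \cref{thm:lip} with any $D\ge\diam K$, $D>0$, and $f_x=d_K(x,\cdot)$; this works because \cref{thm:lip} only requires $\diam K\le D$, and the construction of Sections 3--4 goes through verbatim when $K$ is a single point.
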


\begin{proof}
For spaces of positive diameter, all representative spaces occurring in the construction of \cref{thm:main} are homeomorphic to the Cantor space, so the image is contained in $\M_{\mathrm{Cantor}}$. For a one-point space, choose any point of $\M_{\mathrm{Cantor}}$ and take the unique map into that point.
\end{proof}

\section*{Acknowledgments}
The author used GPT-5.6 and GPT-6 models as AI-assisted tools in preparing this manuscript. The author reviewed and revised the mathematical content and takes full responsibility for the final manuscript.

\bigskip
\noindent\textsc{Mathematical Institute, Tohoku University}\\
Sendai 980-8578, Japan\\
\textit{Email address:} \texttt{fukano.ryo.t5@dc.tohoku.ac.jp}

\end{document}